\newtheorem{theorem}{Theorem}
\newtheorem{prop}[theorem]{Proposition}
\newtheorem{lem}[theorem]{Lemma}
\newtheorem{definition}[theorem]{Definition}
\numberwithin{theorem}{section}
\newtheorem{rem}[theorem]{Remark}
\newcommand{\Om} {\Omega}
\newcommand{\be} {\begin{equation}}
\newcommand{\ee} {\end{equation}}
\newcommand{\bea} {\begin{eqnarray}}
\newcommand{\eea} {\end{eqnarray}}
\newcommand{\Bea} {\begin{eqnarray*}}
\newcommand{\Eea} {\end{eqnarray*}}
\newcommand{\al} {\alpha}
\newcommand{\la} {\lambda}
\def\R{{\mathbb R}}
\def\C{{\mathcal C}}
\def\C{{\mathcal C}}
\newcommand{\rar}{\rightarrow}
\numberwithin{equation}{section}
\begin{document}
\title[Bifurcation theorems] { On the bifurcation for fractional Laplace equations}

\author[G.Dwivedi,\, J.\,Tyagi, R.B.Verma ]
{ G. Dwivedi,  J.Tyagi, R.B.Verma }

\address{G.\,Dwivedi \hfill\break
 Indian Institute of Technology Gandhinagar \newline
 Vishwakarma Government Engineering College Complex \newline
 Chandkheda, Visat-Gandhinagar Highway, Ahmedabad \newline
 Gujarat, India - 382424}
 \email{dwivedi\_gaurav@iitgn.ac.in}

\address{J.\,Tyagi \hfill\break
 Indian Institute of Technology Gandhinagar \newline
 Vishwakarma Government Engineering College Complex \newline
 Chandkheda, Visat-Gandhinagar Highway, Ahmedabad \newline
 Gujarat, India - 382424}
 \email{jtyagi@iitgn.ac.in, jtyagi1@gmail.com}

\address{R.B.Verma \hfill\break
 Indian Institute of Technology Gandhinagar \newline
 Vishwakarma Government Engineering College Complex \newline
 Chandkheda, Visat-Gandhinagar Highway, Ahmedabad \newline
 Gujarat, India - 382424}
 \email{ram.verma@iitgn.ac.in}

\date{14--06--2016}
\thanks{Submitted 14--06--2016.  Published-----.}
\subjclass[2010]{Primary 35A15, 35B32;  Secondary 47G20.}
\keywords{Variational methods, bifurcation, integrodifferential operators, fractional Laplacian }

\begin{abstract}
In this paper, we consider the bifurcation problem for fractional Laplace equation
 \begin{eqnarray*}
 \begin{array}{ll}
 (-\Delta)^{s} u =  \lambda u + f(\lambda,\,x,\,u)& \mbox{in }\Omega,\\
 u   = 0 &\mbox{in }\R^n\backslash \Omega,
\end{array}
 \end{eqnarray*}
where $\Om\subset \R^n,\,n> 2s (0<s<1)$ is an open bounded subset with smooth boundary,\,$(-\Delta)^{s}$ stands for the fractional Laplacian.
We show that a continuum of solutions bifurcates out from the principal
eigenvalue $\lambda_1$ of the eigenvalue problem
\begin{eqnarray*}
 \begin{gathered}
 (-\Delta)^{s} v  =  \lambda v\,\,\,\mbox{in}\,\,\Omega,\\
 v   = 0 \,\,\,\,\mbox{in}\,\,\,\,\R^n \backslash\Omega,
\end{gathered}
 \end{eqnarray*}
and, conversely.

\end{abstract}
\maketitle
\section{Introduction}
In this paper, we consider the following bifurcation problem
 \begin{eqnarray}\label{bif}
 \left\{
\begin{gathered}
 (-\Delta)^{s} u =  \lambda u + f(\lambda,\,x,\,u)\,\,\,\mbox{in}\,\,\Omega,\\
 u   = 0 \,\,\,\,\mbox{in}\,\,\,\,\,\,\,\R^n\backslash \Omega,
\end{gathered}\right.
 \end{eqnarray}
where $\Om\subset \R^n,\,n> 2s \,(0<s<1)$ is an open bounded subset with smooth boundary, $(-\Delta)^{s}$ stands for the fractional Laplacian
and the conditions on $f$ will be specified later.
The fractional Laplacian is the infinitesimal generator of L\'{e}vy stable diffusion process, see\,\cite{ap,be} and it appears in several branches of sciences
and engineering and has substantial applications such as free boundary problem \cite{cs1,cs2},  thin film obstacle problem\, \cite{cs2,sil}, conformal geometry \cite{cha},\,
cell biology\,\cite{sa}, chemical and contaminant transport in heterogeneous aquifers \cite{ben}, phase transitions \cite{sir}, crystal dislocation\,\cite{ga} and many others.
We refer to \cite{di} and references therein for an elementary introduction on this subject.

Recently, there has been a good amount of work on the existence and qualitative questions to fractional Laplace equations. There are several articles dealing with these questions.
We just name a few articles, see for instance\,\cite{cap,ta1,ta2} and the references therein. We refer to \cite{vaz} on recent progress in the
study of nonlinear diffusion equations involving nonlocal, long-range diffusion effects. Using variational and topological techniques, more precisely
the variational principle of Ricceri, the authors \cite{bis} obtained the existence of a nontrivial solution to the problem similar to \eqref{bif}.
Recently, using pseudo-index related to the $\mathbf{Z}_2$-cohomological
index theory, Perera et al. \cite{per} established the bifurcation and multiplicity results for critical fractional $p$-laplacian problems.

So in this context, it is natural to ask the bifurcation results to \eqref{bif} with different delicate techniques.
In the best of our knowledge, we are not aware with the bifurcation results for \eqref{bif}.  We recall that there is an enormous work on bifurcation theory
to Laplace as well as $p$-Laplace equations and other higher order operators. We just mention those articles which are closely related to this paper.

Using the topological degree argument and fixed point theory, Rumbos and Edelson\,\cite{ram}
studied the bifurcation from the first eigenvalue in $\R^N$ to semilinear elliptic
equations and obtained the existence of bifurcating branches. Using the topological degree argument,  Dr\'{a}bek and Huang \cite{dra}, Dr\'{a}bek\,\cite{dra1,dra2}
obtained the bifurcating branches to
$p$-Laplace equations. We remark that the isolatedness of the principal eigenvalue $\lambda_1$ of the associated eigenvalue problem with bifurcation problem
plays an important role to obtain the bifurcation results to Laplace, $p$-Laplace and fully nonlinear elliptic equations, see for instance\,\cite{ram} for
Laplace equations,\,\cite{dra} for $p$-Laplace equations and \cite{mar,beq} for fully nonlinear elliptic equations.
In case of Laplace as well as $p$-Laplace equations, the isolatedness of $\lambda_1$ is obtained by establishing an
estimate on the nodal domains of the solutions to the eigenvalue problem and in fact, Picone's identity plays a role to get an estimate on the nodal domains.
One of the difficulties here is to obtain the isolatedness of the principal eigenvalue $\lambda_1$ of
\begin{eqnarray}\label{ei}
 \left\{
\begin{gathered}
 (-\Delta)^{s} v  =  \lambda v\,\,\,\mbox{in}\,\,\Omega,\\
 v   = 0 \,\,\,\,\mbox{in}\,\,\,\,\,\,\,\R^n \backslash\Omega.
\end{gathered}\right.
 \end{eqnarray}
Indeed, we obtain an elementary inequality which helps us to get an estimate on the nodal domains and which further leads to the isolatedness of $\lambda_1$ of \eqref{ei}.
We make use of the isolatedness of $\lambda_1$ and by the celebrated Rabinowitz's global bifurcation theorem \cite{rab}, prove the bifurcation results to \eqref{bif}.
We also show that if $(\bar{\lambda},\,0)$ is a bifurcation point of \eqref{bif}, then $\bar{\lambda}$ is an eigenvalue of \eqref{ei}. A similar approach is used by
Del Pino and Man\'{a}sevich \cite{del} and Busca et al.\cite{beq}, where the authors have obtained the similar results for $p$-Laplace equations and the equations
involving Pucci's operators, respectively.

More precisely, we state the following theorems, which we will prove in the last section, using the below hypotheses:

\begin{theorem}
 Let $2s<n<4s,\,\,0<s<1.$ Let \rm{(H1)}--\rm{(H3)} hold. Then the principal eigenvalue $\lambda_1$ of the eigenvalue problem \eqref{ei} is a bifurcation point of \eqref{bif}.
 \end{theorem}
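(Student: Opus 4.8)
The plan is to recast \eqref{bif} as an abstract operator equation on the fractional Sobolev space and then apply Rabinowitz's bifurcation theorem at the characteristic value $1/\lambda_1$ of the solution operator $K=(-\Delta)^{-s}$ of the Dirichlet problem.

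First I would fix the functional framework. Let $X_0^s(\Omega)$ be the Hilbert space of measurable functions $u$ on $\mathbb{R}^n$ with $u\equiv 0$ a.e.\ on $\mathbb{R}^n\setminus\Omega$ and finite Gagliardo seminorm, equipped with the inner product $\langle u,v\rangle=\iint_{\mathbb{R}^{2n}}\frac{(u(x)-u(y))(v(x)-v(y))}{|x-y|^{n+2s}}\,dx\,dy$, which is a norm by the fractional Poincar\'e inequality. For $g\in L^2(\Omega)$ let $Kg\in X_0^s(\Omega)$ be the unique weak solution of $(-\Delta)^s w=g$ in $\Omega$, $w=0$ in $\mathbb{R}^n\setminus\Omega$. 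Composed with the compact embedding $X_0^s(\Omega)\hookrightarrow L^2(\Omega)$, $K$ is a compact, self-adjoint, positive operator on $X_0^s(\Omega)$, and its positive eigenvalues are exactly the reciprocals $1/\lambda_1>1/\lambda_2\ge\cdots$ of the eigenvalues of \eqref{ei}. Here I would use the two facts established earlier in the paper: $\lambda_1$ is simple, with a positive eigenfunction, and $\lambda_1$ is isolated in the spectrum (both coming from the nodal-domain estimate and the elementary inequality of the preceding section). In particular $1/\lambda_1$ is a characteristic value of $K$ of algebraic multiplicity one, hence of odd multiplicity.

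Next I would rewrite the problem. With $N_f(\lambda,u)(x):=f(\lambda,x,u(x))$ the Nemytskii operator of $f$, a function $u\in X_0^s(\Omega)$ solves \eqref{bif} if and only if
\[
u=\lambda K u+\mathcal H(\lambda,u),\qquad \mathcal H(\lambda,u):=K\bigl(N_f(\lambda,u)\bigr).
\]
The Carath\'eodory and growth conditions in (H1)--(H3) --- and this is where the dimensional restriction $2s<n<4s$ is consumed, since it keeps the growth of $f$ strictly below the critical exponent $2_s^*=2n/(n-2s)$, for which the embeddings $X_0^s(\Omega)\hookrightarrow L^p(\Omega)$ are compact --- guarantee that $\mathcal H(\lambda,\cdot)$ is a compact operator on $X_0^s(\Omega)$, that $\mathcal H$ is continuous in $(\lambda,u)$, and that $\|\mathcal H(\lambda,u)\|_{X_0^s}=o(\|u\|_{X_0^s})$ as $\|u\|_{X_0^s}\to 0$, uniformly for $\lambda$ in bounded intervals (the last point using the $o(|u|)$ behaviour of $f(\lambda,x,u)$ at $u=0$ built into the hypotheses).

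Finally, the equation $u=\lambda K u+\mathcal H(\lambda,u)$ now has $K$ compact and linear, $\mathcal H$ compact, continuous, and satisfying $\mathcal H(\lambda,u)=o(\|u\|_{X_0^s})$ near the origin, and $1/\lambda_1$ a characteristic value of $K$ of odd algebraic multiplicity --- exactly the hypotheses of the Krasnoselskii--Rabinowitz bifurcation theorem \cite{rab}. That theorem then yields a connected set of nontrivial solutions of \eqref{bif} in $\mathbb{R}\times X_0^s(\Omega)$ whose closure meets $(\lambda_1,0)$; in particular $(\lambda_1,0)$ is a bifurcation point of \eqref{bif}, which is the assertion. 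I expect the main obstacle to lie in the second step --- distilling from (H1)--(H3) and the restriction $2s<n<4s$ the precise continuity, compactness, and order-of-vanishing properties of $\mathcal H$ on $X_0^s(\Omega)$, which needs careful bookkeeping with the fractional Sobolev embeddings --- together with the input from the earlier section that $\lambda_1$ is simple and isolated; once these are in hand, the appeal to Rabinowitz's theorem is routine.
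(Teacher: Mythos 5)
Your proposal is correct in substance but takes a genuinely different route from the paper, and the comparison is worth recording. You recast \eqref{bif} as $u=\lambda K u+\mathcal H(\lambda,u)$ with $K=(-\Delta)^{-s}$ compact, linear, self-adjoint on $H^s_0(\Omega)$, observe that $1/\lambda_1$ is then a characteristic value of odd (indeed one-dimensional) multiplicity, and apply the Krasnoselskii--Rabinowitz theorem in its standard Leray--Schauder form. The paper instead works with the operators $S-\lambda T-F(\lambda,\cdot)$ as maps $X\to X^*$, uses Skrypnik's degree for operators satisfying condition $\alpha(X)$, and spends most of the argument computing the jump of the index $\mathrm{Ind}(\tilde S_\lambda,0)$ from $+1$ (for $\lambda<\lambda_1$) to $-1$ (for $\lambda\in(\lambda_1,\lambda_1+\delta)$) by means of an auxiliary cut-off functional $J_\lambda$ in the manner of Dr\'abek. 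That computation is precisely what forces the paper to establish the isolatedness of $\lambda_1$ (Proposition~\ref{pn2}) via the nodal-domain estimate (Proposition~\ref{pn1}), and it is Proposition~\ref{pn1} that consumes the restriction $2s<n<4s$ (it is needed to get $v\in C(\R^n)$ so that the restriction $v|_N$ lies in $H^s_0(N)$). In your setting, by contrast, isolatedness of $1/\lambda_1$ is automatic because the nonzero spectrum of a compact operator is discrete, and the growth hypotheses (H2) already give subcriticality through $\gamma<2_*(s)-1$; so your attribution of the role of $2s<n<4s$ to ``keeping $f$ subcritical'' is off, and, more interestingly, your argument does not appear to need $n<4s$ at all. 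The paper's heavier machinery is the one that generalizes to genuinely nonlinear operators (as for the $p$-Laplacian, where no compact linear inverse exists); your route is cleaner and exploits the linearity of $(-\Delta)^s$. One point to flag: both arguments quietly rely on the \emph{simplicity} of $\lambda_1$ (you to get odd multiplicity, the paper to conclude that $J_\lambda'$ has precisely the three critical points $\pm u_1,\,0$), and the paper never proves simplicity; it is a standard consequence of the positivity of $\phi_1$ and the Picone-type inequality of Lemma~\ref{io}, but a complete writeup should make it explicit.
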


\begin{theorem}
 Let \rm{(H1)} and \rm{(H4)} hold. Let $(\bar{\lambda},\,0)$ be a bifurcation point of \eqref{bif}, then $\bar{\lambda}$ is an eigenvalue of \eqref{ei}.
\end{theorem}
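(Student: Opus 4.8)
The plan is to argue by contradiction, exploiting the variational (weak) formulation of \eqref{bif} in the fractional Sobolev space $X_0 = \{u \in H^s(\R^n) : u = 0 \text{ in } \R^n\setminus\Om\}$ equipped with the Gagliardo seminorm $\|u\|_{X_0}^2 = \iint_{\R^{2n}} \frac{|u(x)-u(y)|^2}{|x-y|^{n+2s}}\,dx\,dy$. Recall that $(\bar\la,0)$ being a bifurcation point means there is a sequence $(\la_k, u_k)$ of nontrivial weak solutions of \eqref{bif} with $\la_k \to \bar\la$ and $u_k \to 0$ in $X_0$. First I would normalize: set $v_k = u_k/\|u_k\|_{X_0}$, so $\|v_k\|_{X_0} = 1$. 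Testing the weak formulation of \eqref{bif} with an arbitrary $\va \in X_0$ and dividing by $\|u_k\|_{X_0}$ gives
\begin{equation*}
\iint_{\R^{2n}} \frac{(v_k(x)-v_k(y))(\va(x)-\va(y))}{|x-y|^{n+2s}}\,dx\,dy = \la_k \int_\Om v_k \va \,dx + \int_\Om \frac{f(\la_k, x, u_k)}{\|u_k\|_{X_0}}\,\va\,dx.
\end{equation*}

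Next I would pass to the limit. Since $\{v_k\}$ is bounded in $X_0$, up to a subsequence $v_k \rightharpoonup v$ weakly in $X_0$ and $v_k \to v$ strongly in $L^2(\Om)$ by the compact embedding $X_0 \hookrightarrow\hookrightarrow L^2(\Om)$ (here $n > 2s$ is what guarantees the embedding and its compactness). The hypothesis (H4) — which I expect to be the condition $f(\la,x,t) = o(|t|)$ as $t \to 0$, uniformly for $\la$ in bounded sets and $x \in \Om$ — is exactly what forces the last integral to vanish in the limit: one estimates $|f(\la_k,x,u_k)| \le \epsilon |u_k|$ on the set where $u_k$ is small, controls the remainder with (H1) (a subcritical growth bound, giving an $L^2$- or $L^{2^*_s}$-type bound on $f/\|u_k\|_{X_0}$), and uses $u_k \to 0$ to conclude the $o(1)$ behavior via Hölder and the Sobolev embedding. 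Passing to the limit in the displayed identity then yields that $v$ is a weak solution of the eigenvalue problem \eqref{ei} with eigenvalue $\bar\la$, i.e.
\begin{equation*}
\iint_{\R^{2n}} \frac{(v(x)-v(y))(\va(x)-\va(y))}{|x-y|^{n+2s}}\,dx\,dy = \bar\la \int_\Om v\,\va\,dx \quad \text{for all } \va \in X_0.
\end{equation*}

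The remaining point — and the main technical obstacle — is to show $v \not\equiv 0$, so that $\bar\la$ is genuinely an eigenvalue rather than a vacuous statement. Here I would take $\va = v_k$ in the pre-limit identity: the left side equals $1$, the first term on the right converges to $\bar\la\int_\Om v^2\,dx$ (strong $L^2$ convergence), and the nonlinear term again tends to $0$ by (H4)/(H1). Hence $\bar\la \int_\Om v^2\,dx = 1 \neq 0$, which forces $v \neq 0$. (This simultaneously reproves the weak lower semicontinuity step and pins down $v$.) Thus $v$ is a nontrivial weak solution of \eqref{ei} with parameter $\bar\la$, so $\bar\la$ is an eigenvalue of \eqref{ei}, completing the proof. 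The delicate part throughout is the uniform smallness of the nonlinear term: one must be careful that the growth hypothesis in (H1) is subcritical (exponent strictly below $2^*_s = \frac{2n}{n-2s}$) so that the Sobolev embedding is compact in the relevant Lebesgue space and the Nemytskii operator associated with $f$ is continuous and maps bounded sets to bounded sets, allowing the $o(\|u_k\|_{X_0})$ estimate to go through uniformly in $k$.
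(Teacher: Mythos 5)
Your overall strategy matches the paper's: normalize $v_k = u_k/\|u_k\|_X$, pass to the limit in the weak formulation, and show the limit is a nontrivial solution of the linear eigenvalue problem. But there are two points worth flagging. First, the routes diverge in how nontriviality of the limit is obtained. The paper reformulates \eqref{bif} through the resolvent operator $R$ of $(-\Delta)^s$, writes $\widetilde{u_n} = R\bigl(\lambda_n \widetilde{u_n} + F(\lambda_n,u_n)/\|u_n\|_X\bigr)$, shows the nonlinear summand tends to $0$ in $L^{q'}$, and then uses complete continuity of $R$ restricted to $L^{q'}$ to upgrade weak to \emph{strong} convergence $\widetilde{u_n}\to\widetilde{u}$ in $X$, which directly gives $\|\widetilde{u}\|_X=1\neq 0$. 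You instead stay in the weak formulation and test with $v_k$ itself, passing to the limit in $1=\|v_k\|_X^2 = \lambda_k\int_\Omega v_k^2 + (\text{nonlinear})$ to obtain $\bar\lambda\int_\Omega v^2 = 1$, hence $v\neq 0$. That is a valid and slightly more economical alternative (and as a byproduct it also rules out $\bar\lambda=0$).

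Second, and more substantively, you misread the hypotheses. In the paper, (H4) is a growth condition at \emph{infinity}: $|f(\lambda,x,t)|/|t|^{q-1}\to 0$ as $|t|\to\infty$ uniformly, with $1<q<2_*(s)$; the $o(|t|)$-as-$t\to 0$ condition you attribute to (H4) is actually (H3). Likewise (H1) is only the Carath\'eodory condition, not a growth bound (that is (H2)). To make the key step $\int_\Omega \frac{f(\lambda_k,x,u_k)}{\|u_k\|_X}\varphi\,dx \to 0$ work, one genuinely needs both regimes: split $\Omega$ into $\{|u_k|<\delta\}$, where (H3) gives $|f|\leq\epsilon|u_k|$, and $\{|u_k|\geq\delta\}$, where (H4) gives $|f|\leq M|u_k|^{q-1}$, and then exploit $u_k\to 0$ in $L^{2_*(s)}$ together with a H\"older exponent $r$ chosen so that $q'(q-2)/2_*(s) < 1/r < 1-q'/2_*(s)$. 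This is exactly what the paper does in \eqref{i}--\eqref{cnn}. In fact the paper's own proof invokes (H3) even though the theorem statement lists only (H1) and (H4); with only (H1) and (H4) (take e.g.\ $f\equiv 1$) the conclusion is false. So your instinct that a smallness condition near $t=0$ is indispensable is correct, but you have the wrong label on it, and you should say explicitly that (H3) is being used.
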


We list the following hypotheses which have been used in this paper:
\begin{itemize}
 \item[(H1)]\,\,$f: \R\times\Om\times \R\longrightarrow \R$ such that $f$ is a Carath\'{e}odory function, i.e., $f(\cdotp,\,x,\,\cdotp)$ is continuous for a.e. $x\in \Om$ and $f(\lambda, \cdotp,\,t)$ is measurable
 for all $(\lambda,\,t)\in \R^2.$
 \item[(H2)]\,\,$|f(\lambda,\,x,\,t)|\leq C(\lambda)(m_{1}(x) + m_{2}(x)|t|^\gamma )$ for a.e. $x\in \Om, t\in \R$ and $0\leq C(\lambda)$ is continuous and bounded on bounded subsets of $\R,
 1<\gamma < 2_{*}(s)-1,\,\,m_1 \in L^{\frac{n}{2s}}(\Om),\,\,0\leq m_{2}\in L^{\gamma_1}(\Om), $ where $2_{*}(s)= \frac{2n}{n-2s},\,\,\gamma_1= \frac{2n}{n+2s- (n-2s)\gamma},\,\,n>2s.$

 \item[(H3)]\,\,$\lim_{t\rightarrow 0} \frac{f(\lambda,\,x,\,t)}{t}=0$\,\,uniformly for a.e. $x\in \Om$ and $\lambda$ in a bounded interval.

 \item[(H4)]\,\,There is a $q$ with $1<q<2_{*}(s),$ such that
 $$  \lim_{|t|\rightarrow \infty}  \frac{|f(\lambda,\,x,\,t)|}{|t|^{q-1} } =0                            $$
 uniformly a.e.\,with respect to $x$\, and uniformly with respect to $\lambda$ on bounded sets.
 \end{itemize}
The organization of this paper is as follows. Section 2 deals with the preliminaries on the fractional Laplacian. In Section 3, we establish auxiliary lemmas and propositions which
have used to establish the main theorems. Section 4 deals with the introduction of topological degree and its properties and finally we prove the main theorems in Section 5.

 \section{Preliminaries}
Let us recall the brief preliminaries on the fractional Laplacian. Let $0<s<1.$ There are various definitions to define fractional Laplacian $(-\Delta)^{s} u$ of a function $u$
defined on $\R^n.$
It is well known that $(-\Delta)^{s}$ on $\R^n,\,\,0<s<1$ is a nonlocal operator and it can be defined through its Fourier transform.
Thus, if $u$ is a function in the Schwarz class in $\R^n,\,n\geq 1,$ denoted by $S(\R^n),$  then we have
$$  \widehat{(-\Delta)^{s} }u(\xi)=    {|\xi|}^{2s} \widehat{u} (\xi)   $$ and therefore
we write $$(-\Delta)^{s} u=f  \,\,\,\mbox{ if }\,\,\,\,\widehat{f}(\xi) = {|\xi|}^{2s} \widehat{u} (\xi),$$
where\, $\widehat{}$\, is Fourier transform. When $u$ is sufficiently regular, the fractional Laplacian of a function $u:\R^n\longrightarrow \R$ is defined as follows:

$$   (-\Delta)^{s} u(x)  = C_{n,\,s} P.V.\int_{\R^n} \frac{u(x)- u(y)}{|x-y|^{n+2s}} dy=
C_{n,\,s}\lim_{\epsilon \rightarrow 0} \int_{\R^n \backslash B_{\epsilon}(x)} \frac{u(x)- u(y)}{|x-y|^{n+2s}} dy,  $$
where $$C_{n,\,s}= s.2^{2s} \frac{\Gamma(s+\frac{n}{2} )}{\pi^{\frac{n}{2} } \Gamma(1-s)},$$ which is a normalization constant.

One can also write the above singular integral as follows:
\begin{equation}
 (-\Delta)^{s} u(x) = - \frac{C_{n,\,s}}{2}\int_{\R^n} \frac{u(x+y) + u(x-y)- 2 u(x) }  {|y|^{n+2s} } dy,\,\,\forall\,\,x\in \R^n,\,u\in S(\R^n),\,
\end{equation}
see\,\cite{di}. When $s<\frac{1}{2}$ and $f\in C^{0,\,\alpha} (\R^n)$ with $\alpha >2s,$ or if $f \in C^{1,\,\alpha}(\R^n),\,\,1+ 2\alpha >2s,$ the above integral is well-defined.
By the celebrated work of Caffarelli and Silvestre \cite{cs}, the nonlocal operator can be expressed as a generalized Dirichlet-Neumann map for a certain elliptic boundary
value problem with nonlocal operator defined on the upper half space $\R_{+}^{n+1}:= \{ (x,\,y)|\,\,x\in \R^n,\,y>0    \},$ i.e., for a function $u:\R^n\longrightarrow \R,$
we consider the extension $v(x,\,y):\,\R^n\times [0,\,\infty)\longrightarrow \R$ that satisfy
\begin{eqnarray}
 v(x,\,0)= u(x).\nonumber\\
 \Delta_{x}v + \frac{1-2s}{y} v_{y} + v_{yy}=0.\label{ext}
 \end{eqnarray}
\eqref{ext} can  also be written as
\begin{equation}
 div(y^{1-2s} \nabla v )=0,
\end{equation}
which is the Euler-Lagrange equation for the functional
\begin{equation}
 J(v)= \int_{y>0} |\nabla v|^{2} y^{1-2s}dxdy.
 \end{equation}
Then it can be seen from \cite{cs}, that
\begin{equation}
 C (-\Delta)^{s} u = \lim_{y\rightarrow 0^+} - y^{1-2s} v_{y}= \frac{1}{2s} \lim_{y\rightarrow 0^+} \frac{v(x,\,y)- v(x,\,0)}{y^{2s}},
\end{equation}
where $C$ is some constant depending on $n$ and $s.$
 The space $H^{s} (\R^n)= W^{s,\,2}(\R^n)$ is defined by
\begin{align*}
 H^{s}(\R^n)& = \left\{  u\in L^{2}(\R^n):\,\,\frac{|u(x) -u(y)|} {|x-y|^{s+\frac{n}{2}}} \in L^{2} (\R^n \times \R^n) \right \}  \\
&= \left\{   u\in L^{2}(\R^n):\,\,\,\,(1+|\xi|^2)^{\frac{s}{2}} \widehat{u} (\xi)\in L^{2} (\R^n)\right \}
\end{align*}
and the norm is defined as follows:
\begin{equation*}
 || u||_{s}:= ||u||_{H^{s}(\R^n)}  = \left(\int_{\R^n} \int_{\R^n} \frac{|u(x) -u(y)|^2} {|x-y|^{n+2s}} dxdy  + \int_{\R^n}|u|^{2} dx \right)^{\frac{1}{2} }.
\end{equation*}
The term
\begin{equation*}
 [u]_{s}:=  \left(\int_{\R^n} \int_{\R^n} \frac{|u(x) -u(y)|^2} {|x-y|^{n+2s}} dxdy \right)^{\frac{1}{2} }
\end{equation*}
is called the Gagliardo norm of $u.$ We recall that $H^{s}(\R^n)$ is a Hilbert space and its norm is induced by the inner product

\begin{equation*}
 <u,\,v >_{H^{s}(\R^n)}  = \int_{\R^n} \int_{\R^n} \frac{(u(x) -u(y))(v(x) - v(y))} {|x-y|^{n+2s}} dxdy  + \int_{\R^n} u(x) v(x) dx.
 \end{equation*}

Let $\{ \phi_{k}\}$ be an orthonormal basis of $L^{2}(\Omega)$ with $||\phi_{k}||_{2}=1$
forming a spectral decomposition of $-\Delta$ in $\Omega$ with zero Dirichlet boundary conditions and $\lambda_{k}$ be the corresponding eigenvalues. Let
\begin{equation*}
 H_{0}^{s}(\Omega)= \left\{ u= \sum_{k=0}^{\infty}a_{k} \phi_{k} \in L^{2}(\Omega) \,\,:\,\,  || u ||_{ H_{0}^{s}(\Omega)}=
 \left( \sum_{k=0}^{\infty} a_{k}^2 \lambda_{k}^s \right)^{\frac{1}{2} } <\infty   \right\}.
\end{equation*}

We denote $H^{-s}(\Omega)$ the dual space of $H_{0}^{s}(\Omega).$ For $u\in H_{0}^{s}(\Omega),\,\,\,u=\sum_{k=0}^{\infty}a_{k} \phi_{k} $
with $a_k= \int_{\Omega} u \phi_{k} dx,$ then one can also define $(-\Delta)^s$ as follows (see\,\cite{di,ros}) :

\begin{equation*}
 (-\Delta)^{s} u= \sum_{k=0}^{\infty}a_{k} \lambda_{k}^{s}\phi_{k}\in H^{-s}(\Omega).
 \end{equation*}
We call $\{(\phi_k,\,\lambda_{k}^s)\}$ the eigenfunctions and eigenvalues of $ (-\Delta)^{s}$ in $\Omega$ with zero Dirichlet boundary conditions.
We recall that these two definitions of fractional Laplacian give rise to two completely different non-local operators of fractional type, i.e., the fractional Laplacian can be defined
in both these ways, but the two definitions are not equivalent in bounded domains, see for instance \cite{bran,mus}. In this paper, we will be using the definition of
fractional Laplacian via Fourier transform or equivalently, as an integral in the P.V. sense.

Let $\lambda_1>0$ be the first eigenvalue of $(-\Delta)^{s}$ in $\Omega$ and $\phi_1 >0$ be the corresponding eigenfunction (first eigenfunction), i.e.,
\begin{eqnarray*}
 \left\{
\begin{gathered}
 (-\Delta)^{s} \phi_1  =  \lambda_1 \phi_1\,\,\,\mbox{in}\,\,\Omega,\\
 \phi_1   > 0 \,\,\,\,\mbox{in}\,\,\,\,\,\,\Omega,\\
 \phi_1   = 0 \,\,\,\,\mbox{in}\,\,\,\,\,\,\,\R^n\backslash \Omega.
\end{gathered}\right.
 \end{eqnarray*}
The variational characterization of $\lambda_1$ is given by
\begin{equation}\label{vari}
 \lambda_1= \inf\left\{ \int_{\R^n}|(-\Delta)^{\frac{s}{2}}v|^2 dx :\,\,\,v\in X=H_{0}^{s}(\Om)\,\,\,\mbox{and}\,\,\int_{\Om}v^2 dx=1\right\}.
\end{equation}
The next section deals with the auxiliary lemmas and
propositions, which have been used in the proof of main theorems.

\section{Auxiliary Lemmas and Propositions}

 Let us denote $H_{0}^{s}(\Om)$ by $X$ and its dual $H^{-s}(\Om)$ by $X^*.$ We define the operators $S,\,T,\,F(\lambda,\,\cdotp):\,\,X\longrightarrow X^*$ as follows:\\
 For\,\,$u,\,v\in X,$
 \begin{eqnarray}
  ((S(u),\,v))_{X}= \int_{\R^n} (-\Delta)^{\frac{s}{2} } u\cdotp (-\Delta)^{\frac{s}{2}}v,\\
  ((T(u),\,v))_{X}=\int_{\Om}  u\cdotp v,\\
  ((F(\lambda,\,u),\,v))_{X}=\int_{\Om} f(\lambda,\,x,\,u)\cdotp v.
 \end{eqnarray}
Let us recall the following embeddings.
\begin{theorem}\cite{di}\label{em}
 The following embeddings are continuous:\\
 \rm{(1)}\,\,$H^{s}(\R^n)\hookrightarrow L^{q}(\R^n),\,\,2\leq q\leq \frac{2n}{n-2s},\,\,\,\mbox{if}\,\,\,n>2s,$\\
 \rm{(2)}\,\,$H^{s}(\R^n)\hookrightarrow L^{q}(\R^n),\,\,2\leq q\leq \infty,\,\,\,\mbox{if}\,\,\,n=2s,$\\
 \rm{(3)}\,\,$H^{s}(\R^n)\hookrightarrow C_{b}^{j}(\R^n),\,\,\mbox{if}\,\,\,n<  2(s-j).$\\
 Moreover, for any $R>0$ and any $p\in [1,\,2_{*}(s))$ the embedding $H^{s}(B_R)   \hookrightarrow\hookrightarrow L^{p}(B_R)$
 is compact, where
 $$ C_{b}^{j}(\R^n)= \{ u\in C^{j}(\R^n):  D^{k} u \,\,\mbox{is bounded on }\,\R^n\,\,\mbox{for}\,|k|\leq j\}. $$

 \end{theorem}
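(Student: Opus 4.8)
Since Theorem~\ref{em} is the classical fractional Sobolev embedding, the plan is to pass to the Fourier side, where $\|u\|_{H^{s}(\R^n)}\asymp\big(\int_{\R^n}(1+|\xi|^2)^{s}|\widehat u(\xi)|^2\,d\xi\big)^{1/2}$ and $[u]_{s}^{2}\asymp\int_{\R^n}|\xi|^{2s}|\widehat u(\xi)|^2\,d\xi$, and to reduce the full range of exponents in (1)--(2) to a single endpoint estimate plus interpolation, while treating (3) and the compact embedding (4) by soft arguments. The one non-elementary input is the endpoint estimate, which I would establish first: for $n>2s$, the fractional Sobolev inequality $\|u\|_{L^{2_{*}(s)}(\R^n)}\le C(n,s)[u]_{s}$. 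Writing $g=(-\Delta)^{s/2}u$, so that $\|g\|_{L^2(\R^n)}=c_{n,s}[u]_{s}$ and $u=I_{s}g$ with $I_{s}$ the Riesz potential of order $s$, this follows from the Hardy--Littlewood--Sobolev inequality $\|I_{s}g\|_{L^{2_{*}(s)}}\le C\|g\|_{L^2}$; alternatively one argues directly from the Gagliardo seminorm as in \cite{di}.

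For (1) I would then combine this endpoint bound with the trivial inequality $\|u\|_{L^2}\le\|u\|_{H^{s}(\R^n)}$ and the interpolation inequality $\|u\|_{L^{q}}\le\|u\|_{L^2}^{\theta}\|u\|_{L^{2_{*}(s)}}^{1-\theta}$, valid for $2\le q\le 2_{*}(s)$ with $\tfrac1q=\tfrac\theta2+\tfrac{1-\theta}{2_{*}(s)}$. For (2), $n=2s$: Hölder's inequality on the Fourier side gives $\widehat u\in L^{q'}(\R^n)$ for every $q'\in[1,2)$, since $(1+|\xi|^2)^{-s/2}\in L^{r}(\R^n)$ whenever $r>2$, and Hausdorff--Young then yields $u\in L^{q}(\R^n)$ for all $q\in[2,\infty)$. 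For (3), for every multi-index $k$ with $|k|\le j$ the bound $|\xi|^{|k|}(1+|\xi|^2)^{-s/2}\le(1+|\xi|^2)^{-(s-|k|)/2}$ and Cauchy--Schwarz give
\[
\int_{\R^n}|\xi|^{|k|}|\widehat u(\xi)|\,d\xi\le\Big(\int_{\R^n}(1+|\xi|^2)^{-(s-|k|)}\,d\xi\Big)^{1/2}\|u\|_{H^{s}(\R^n)},
\]
and the integral on the right is finite exactly when $2(s-|k|)>n$, which holds for all $|k|\le j$ under the hypothesis $n<2(s-j)$. Hence $\widehat{D^{k}u}=(i\xi)^{k}\widehat u\in L^1(\R^n)$, so by Riemann--Lebesgue $D^{k}u\in C^{0}(\R^n)$ and is bounded, i.e.\ $u\in C_{b}^{j}(\R^n)$, with the obvious norm bound.

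For the compact embedding, fix $R>0$ and a bounded sequence $(u_m)$ in $H^{s}(B_R)$; using an extension operator $H^{s}(B_R)\to H^{s}(\R^n)$ I may assume the $u_m$ are bounded in $H^{s}(\R^n)$ and supported in a fixed ball. From $|e^{ih\cdot\xi}-1|^2\le\min\{|h|^2|\xi|^2,4\}\le 4|h|^{2s}|\xi|^{2s}$ one gets the translation estimate $\|u(\cdot+h)-u\|_{L^2(\R^n)}^2\le C|h|^{2s}[u]_{s}^2$, so the translates of the $u_m$ are equicontinuous in $L^2$; combined with the uniform support and $L^2$-boundedness, the Riesz--Fr\'echet--Kolmogorov compactness criterion produces a subsequence converging in $L^2(B_R)$. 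This upgrades to convergence in $L^{p}(B_R)$ for every $p\in[1,2_{*}(s))$: for $p\le 2$ because $B_R$ has finite measure, and for $2<p<2_{*}(s)$ by applying the interpolation inequality to $u_m-u_\ell$ and using that $(u_m)$ is bounded in $L^{2_{*}(s)}(B_R)$ by part (1).

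The main obstacle is the endpoint estimate of the first paragraph (equivalently, the Hardy--Littlewood--Sobolev inequality), from which cases (1), (2) and the $p>2$ part of the compactness follow purely by interpolation; the translation estimate is the only other place where the explicit Fourier representation of the seminorm is really used. Everything remaining is Hölder's inequality, Hausdorff--Young, and Riemann--Lebesgue.
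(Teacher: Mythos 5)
The paper does not prove this theorem; it is recalled verbatim with a citation to Di Nezza--Palatucci--Valdinoci, where the continuous and compact embeddings are established by direct real-variable manipulations of the Gagliardo seminorm (splitting integrals into near- and far-diagonal regions, H\"older, and an explicit partition into cubes for the critical exponent). Your route passes entirely through the Fourier characterization of $H^{s}$: Hardy--Littlewood--Sobolev for the endpoint in (1), H\"older plus Hausdorff--Young for (2), Cauchy--Schwarz plus Riemann--Lebesgue for (3), and a Plancherel translation estimate feeding Riesz--Fr\'echet--Kolmogorov for the compactness. This is a genuinely different but equally standard proof, and the one real piece of external input (HLS) is correctly identified. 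What the paper's reference buys is a proof internal to the Gagliardo definition, which transfers verbatim to $W^{s,p}$ for general $p$; what your Fourier route buys is brevity and the clean reduction of the whole theorem to HLS plus soft arguments.

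Two small corrections. In case (2) you write $q'\in[1,2)$, but $q'=1$ forces $r=2$, and $(1+|\xi|^2)^{-s/2}\in L^2(\R^n)$ requires $2s>n$, which fails exactly when $n=2s$; the usable range is $q'\in(1,2)$, giving $q\in(2,\infty)$, and $q=2$ is trivial, so your stated conclusion $q\in[2,\infty)$ is fine. This is worth flagging because the theorem as printed in the paper says $2\le q\le\infty$, which is a typo: the endpoint $q=\infty$ fails in the critical case $n=2s$ (for $0<s<1$ this is only $n=1,\,s=1/2$, and $H^{1/2}(\R)\not\hookrightarrow L^\infty(\R)$), and the cited reference itself states $q\in[2,\infty)$. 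Your argument proves the correct statement, not the misprinted one. Second, after invoking the extension operator you assert the extensions are ``supported in a fixed ball''; that needs the extra step of multiplying by a smooth cutoff $\chi$ equal to $1$ on $B_R$ and supported in $B_{2R}$ (multiplication by such $\chi$ is bounded on $H^{s}(\R^n)$), or alternatively you should apply Riesz--Fr\'echet--Kolmogorov on $B_R$ rather than on $\R^n$, where the tightness hypothesis is not an issue. Either fix is one line, but as written the step is glossed over.
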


\begin{lem}\label{lmm1}
 The operators $S,\,T,\,F$ are well-defined, $S$ and $T$ are continuous and $F$ satisfies
\begin{equation}\label{li}
  \lim_{||u||_X \rightarrow 0} \frac{||F(\lambda,\,u) ||_{X^*} }   {||u||_{X}}=0
\end{equation}
uniformly for $\lambda$ in a bounded subset of $\R.$
 \end{lem}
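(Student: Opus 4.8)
The plan is to verify the three assertions in turn, treating $S$ and $T$ first since they are linear and standard, and then concentrating on the nonlinear map $F$, where the real content lies. For $S$, note that $((S(u),v))_X = \int_{\R^n}(-\Delta)^{s/2}u\cdot(-\Delta)^{s/2}v$ is, up to the normalization constant, just the Gagliardo bilinear form, so $|((S(u),v))_X|\le [u]_s[v]_s\le C\|u\|_X\|v\|_X$ by Cauchy--Schwarz; hence $S(u)\in X^*$ with $\|S(u)\|_{X^*}\le C\|u\|_X$, and linearity gives continuity. For $T$, I would use the continuous embedding $X=H_0^s(\Om)\hookrightarrow L^2(\Om)$ from Theorem~\ref{em}(1) (with $q=2$): $|((T(u),v))_X|=|\int_\Om uv|\le\|u\|_{L^2(\Om)}\|v\|_{L^2(\Om)}\le C\|u\|_X\|v\|_X$, so again $T$ is a bounded linear operator into $X^*$.

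The bulk of the argument is the statement about $F$. First I would check $F(\lambda,\cdot)$ is well-defined, i.e. that $v\mapsto\int_\Om f(\lambda,x,u)v$ is a bounded linear functional on $X$. Using (H2), $|f(\lambda,x,u)|\le C(\lambda)(m_1(x)+m_2(x)|u|^\gamma)$, so by H\"older's inequality with the exponents dictated by (H2) — pairing $m_1\in L^{n/2s}$ against $v\in L^{2_*(s)}$ (note $(n/2s)'=2n/(n+2s)\le 2_*(s)$, so the embedding $X\hookrightarrow L^{2n/(n+2s)}$ applies), and pairing $m_2|u|^\gamma$ against $v$ with $m_2\in L^{\gamma_1}$, $|u|^\gamma\in L^{2_*(s)/\gamma}$, and $v\in L^{2_*(s)}$, the triple of reciprocal exponents summing to $1$ by the definition $\gamma_1=\frac{2n}{n+2s-(n-2s)\gamma}$ — one gets
\begin{equation*}
\Big|\int_\Om f(\lambda,x,u)v\Big|\le C(\lambda)\big(\|m_1\|_{n/2s}+\|m_2\|_{\gamma_1}\|u\|_{2_*(s)}^\gamma\big)\|v\|_{2_*(s)}\le C(\lambda)\big(C_1+C_2\|u\|_X^\gamma\big)\|v\|_X,
\end{equation*}
so $F(\lambda,u)\in X^*$ and
\begin{equation*}
\|F(\lambda,u)\|_{X^*}\le C(\lambda)\big(C_1+C_2\|u\|_X^\gamma\big).
\end{equation*}

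For the limit \eqref{li}, the growth bound above already gives $\|F(\lambda,u)\|_{X^*}/\|u\|_X\le C(\lambda)(C_1\|u\|_X^{-1}+C_2\|u\|_X^{\gamma-1})$, and since $\gamma>1$ the second term vanishes as $\|u\|_X\to0$; but the first term blows up, so the crude (H2) bound is not enough — this is the step where (H3) must be brought in, and it is the main obstacle. The fix: fix $\var>0$; by (H3) there is $\de>0$ with $|f(\lambda,x,t)|\le\var|t|$ for $|t|\le\de$, uniformly in $x$ and in $\lambda$ on the given bounded interval, while for $|t|>\de$ we bound $|f(\lambda,x,t)|\le C(\lambda)(m_1(x)+m_2(x)|t|^\gamma)\le C(\lambda)(\de^{-1}m_1(x)|t|+m_2(x)|t|^\gamma)$. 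Splitting $\Om=\{|u|\le\de\}\cup\{|u|>\de\}$ and estimating $\int_\Om f(\lambda,x,u)v$ accordingly: the first region contributes $\le\var\|u\|_{L^2}\|v\|_{L^2}\le C\var\|u\|_X\|v\|_X$; the second region, by the same H\"older scheme as above, contributes $\le C(\lambda)(C_1'\|u\|_X+C_2'\|u\|_X^\gamma)\|v\|_X$ where now the $m_1$-term also carries a factor $\|u\|_X$. Hence $\|F(\lambda,u)\|_{X^*}\le C\var\|u\|_X+C(\lambda)(C_1'\|u\|_X+C_2'\|u\|_X^\gamma)$; dividing by $\|u\|_X$, letting $\|u\|_X\to0$ and then $\var\to0$ gives \eqref{li}, and the uniformity in $\lambda$ is inherited from the uniformity in (H2) and (H3) together with the boundedness of $C(\lambda)$ on bounded sets. (A small point to be careful about: one should verify that on $\{|u|>\de\}$ one really can produce the extra power of $|u|/\de\ge1$ in the $m_1$-term, which is exactly what makes the $C_1'$-term proportional to $\|u\|_X$ rather than a constant; this is legitimate since $|u|/\de\ge1$ there.)
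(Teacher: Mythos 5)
Your setup is right — same decomposition into $\{|u|\le\delta\}$ (where (H3) bites) and $\{|u|>\delta\}$ (where (H2) bites), and your handling of the $m_2$-part is correct (the three-term H\"older scheme with exponents $\gamma_1$, $2_*(s)/\gamma$, $2_*(s)$ sums to one, and the residual $\|u\|_X^{\gamma-1}\to 0$ since $\gamma>1$). But the $m_1$-term on $\{|u|>\delta\}$ is where the argument breaks. After inserting the extra factor $|u|/\delta\ge 1$ and dividing by $\|u\|_X$, what you actually obtain is
\[
\frac{1}{\|u\|_X}\int_{\{|u|>\delta\}}\delta^{-1}m_1\,|u|\,|v|\;\le\;C\,\delta^{-1}\,\|m_1\|_{L^{n/2s}(\{|u|>\delta\})}\,\|v\|_X,
\]
i.e.\ your $C_1'$ is $C\,\delta^{-1}\|m_1\|_{L^{n/2s}}$. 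If you treat this as a fixed constant (as your last displayed estimate does), then $\limsup_{\|u\|_X\to 0}\|F(\lambda,u)\|_{X^*}/\|u\|_X\le C\varepsilon + C(\lambda)C_1'$, and the passage ``then $\varepsilon\to 0$'' fails because $C_1'\propto\delta(\varepsilon)^{-1}\to\infty$ as $\varepsilon\to 0$. The parenthetical worry at the end of your proposal misdiagnoses the problem: producing the extra power of $|u|$ is fine, but after dividing by $\|u\|_X$ that power is used up, and you are left with a $\delta$-dependent constant that does not decay.

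The missing ingredient, which is exactly what the paper's proof supplies, is the observation that for fixed $\delta>0$ the set $\{|u|>\delta\}$ shrinks as $\|u\|_X\to 0$: indeed $\delta\,\mathrm{meas}\{|u|>\delta\}\le\int_{\{|u|>\delta\}}|u|\le C\|u\|_X$, so $\mathrm{meas}\{|u|>\delta\}\to 0$. By absolute continuity of the integral for the fixed $L^{n/2s}$-function $m_1$, this gives $\|m_1\|_{L^{n/2s}(\{|u|>\delta\})}\to 0$ as $\|u\|_X\to 0$. With this, the $m_1$-contribution to $\|F(\lambda,u)\|_{X^*}/\|u\|_X$ is $\le C\,\delta^{-1}\|m_1\|_{L^{n/2s}(\{|u|>\delta\})}\to 0$ for each fixed $\delta$, and the two-step limit closes. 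Once you insert that observation your argument matches the paper's. (One small side remark: you wrote $(n/2s)'=2n/(n+2s)$; in fact $(n/2s)'=n/(n-2s)$, while $2n/(n+2s)=(2_*(s))'$. This does not affect the H\"older bookkeeping since $\Om$ is bounded and $n/(n-2s)\le 2_*(s)$, but the identity should be corrected.)
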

\begin{proof}
In the proof of this lemma, $c_i,\,\,i=1,\,2,\cdots,\,6$ are positive constants.
It is easy to see that the  operators $S,\,T,\,F$ are well-defined and $S$ and $T$ are continuous.
Now we show \eqref{li}. By definition,

\begin{equation} \label{1}
\begin{aligned}
\lim_{||u||_{X} \rightarrow 0} \frac{||F(\lambda,\,u) ||_{X^*} }   {||u||_{X}} &= \lim_{||u||_X \rightarrow 0} \,\sup_{||v||_{X}\leq 1} \frac{1}{||u||_{X}}
\left |\int_{\Om} f(\lambda,\,x,\,u)v \right | \\
& \leq \lim_{||u||_{X} \rightarrow 0} \,\sup_{||v||_{X}\leq 1} \int_{\Om} \frac{|f(\lambda,\,x,\,u)| |v | |\tilde{u}|}{  |u|},\,\,\,\mbox{where}\,\,\tilde{u}= \frac{u}{||u||_X}.
\end{aligned}
\end{equation}
For $\delta>0,\,$ let us define $\Om_{\delta}(u)= \{x\in \Om: |u|\geq \delta \}.$ We claim  that, as $||u||_{X}\rightarrow 0,\,\,\,\,\mbox{meas}(\Om_{\delta}(u))\rightarrow 0.$
Assume, on contrary, suppose that $\mbox{meas}(\Om_{\delta}(u))\geq c_1 >0,$ then
$$ 0<\delta \,\mbox{meas}(\Om_{\delta}(u))\leq \int_{\Om_{\delta}(u)} |u| dx\leq c_2 ||u||_{X}, $$
which is a contradiction and therefore proves the claim. Now by \rm{(H3)}, for any given $\epsilon >0,\,\exists\,\,\delta>0$ such that
\begin{equation}\label{2}
 \frac{|f(\lambda,\,x,\,u)|}{|u|} \leq \epsilon\,\,\,\mbox{uniformly for }\,\,|u|<\delta.
\end{equation}
We split the integral in \eqref{1} on $\Om\backslash\Om_{\delta}(u)$\, and $\Om_{\delta}(u)$ and try to estimate the integrals:

\begin{equation} \label{3}
\begin{aligned}
 \int_{ \Om\backslash\Om_{\delta}(u)  } \frac{|f(\lambda,\,x,\,u)| |v | |\tilde{u}|}{  |u|}  & \leq \epsilon\int_{\Om\backslash\Om_{\delta}(u)} |\tilde{u}|  |v | \\
& \leq \epsilon ||  \tilde{u}  ||_{L^{2}(\Om\backslash\Om_{\delta}(u))}  ||v||_{L^{2}(\Om\backslash\Om_{\delta} (u) )}\\
& \leq c_3 \epsilon
\end{aligned}
\end{equation}
and
\begin{equation} \label{4}
\begin{aligned}
\int_{ \Om_{\delta}(u)  } \frac{|f(\lambda,\,x,\,u)| |v | |\tilde{u}|}{|u|}  & \leq \int_{\Om_{\delta}(u)} \frac{C(\lambda)|m_{1}(x)| |\tilde{u}||v|  } {|u|}    \\
& +   \frac{C(\lambda)}{||u||_X} \int_{\Om_{\delta}(u)} |m_{2}(x)| |u|^{\gamma} |v|.
\end{aligned}
\end{equation}
Let us compute each part in the RHS of \eqref{4} separately.

\begin{align}\nonumber
\int_{ \Om_{\delta}(u)  }  \frac{C(\lambda)|m_{1}(x)| |\tilde{u}||v|  } {|u|} & \leq c_4 \frac{1}{\delta}\int_{ \Om_{\delta}(u)  }  |m_{1}(x) |\tilde{u}||v|  \\ \nonumber
& \leq  c_4 \frac{1}{\delta}  ||m_1 ||_{L^{\frac{n}{2s}} (\Om_{\delta}(u))  } ||\tilde{u} ||_{L^{\frac{2n}{n-2s}} (\Om_{\delta}(u))}
|| v||_{L^{\frac{2n}{n-2s}} (\Om_{\delta}(u))}\\ \label{5}
& \longrightarrow 0\,\,\,\text{as}\,\,\text{meas}(\Om_{\delta}(u)) \longrightarrow 0,
\end{align}
in the last inequality, we have used the fact that $m_1\in L^{\frac{n}{2s}}$ and by Theorem\,\ref{em},\,$\tilde{u}, v\in L^{\frac{2n}{n-2s}}.$
Now

\begin{align}\nonumber
\frac{C(\lambda)}{||u||_X} \int_{\Om_{\delta}(u)} |m_{2}(x)| |u|^{\gamma} |v| &
\leq \frac{c_5}{||u||_X} \left(\int_{ \Om_{\delta}(u)}|u|^{\frac{2n}{n-2s}}\right)^{\frac{\gamma (n-2s)}{2n}  }
\left( \int_{ \Om_{\delta}(u)}|m_{2}(x)v(x)|^{\frac{2n}{2n-(n-2s)\gamma}  }   \right)^{\frac{2n-(n-2s)\gamma}{2n} }\\ \nonumber
& \leq \frac{c_5}{||u||_X}  ||m_2 ||_{L^{\gamma_1} (\Om_{\delta}(u)) }   ||u ||^{\gamma}_{L^{\frac{2n}{n-2s}} (\Om_{\delta}(u))}
|| v||_{L^{\frac{2n}{n-2s}}  (\Om_{\delta}(u))}\\ \nonumber
& \leq \frac{c_6}{||u||_X}||m_2 ||_{L^{\gamma_1} (\Om_{\delta}(u))} ||u ||^{\gamma}_{X}
|| v||_{X}\,\,\,\,\mbox{(by using Theorem\,\ref{em})}\\ \nonumber
& \leq c_6 ||m_2 ||_{L^{\gamma_1} (\Om_{\delta}(u))} ||u ||^{\gamma-1}_{X}
|| v||_{X}\\ \label{6}
& \longrightarrow 0\,\,\,\text{as}\,\,||u||_X\longrightarrow 0.
\end{align}
 This completes the proof of this lemma.
 \end{proof}
\begin{lem}\label{lm1}
 \rm{(i)}\, The operator $T$ is continuous and compact.\,\, \rm{(ii)}\, The operator $F$  is compact.
\end{lem}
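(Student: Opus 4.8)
\emph{Part (i).} The continuity of $T$ is already contained in Lemma \ref{lmm1}, so only compactness is at issue. I would take a bounded sequence $\{u_k\}\subset X$; extending each $u_k$ by $0$ to all of $\R^n$ makes it an element of $H^s(\R^n)$ with norm comparable to $\|u_k\|_X$ (fractional Poincar\'e on the bounded set $\Om$), so, restricting to a fixed ball $B_R\supset\Om$ and applying Theorem \ref{em} with $p=2<2_{*}(s)$, a subsequence $\{u_{k_j}\}$ converges strongly in $L^2(\Om)$ to some $u$. Then for $\|v\|_X\le1$,
\[
\big|((T(u_{k_j})-T(u),v))_X\big|=\Big|\int_\Om (u_{k_j}-u)\,v\Big|\le\|u_{k_j}-u\|_{L^2(\Om)}\,\|v\|_{L^2(\Om)}\le c\,\|u_{k_j}-u\|_{L^2(\Om)},
\]
using the continuous embedding $X\hookrightarrow L^2(\Om)$; taking the supremum over $v$ gives $T(u_{k_j})\to T(u)$ in $X^*$, so $T$ is compact.

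\emph{Part (ii).} Write $p^*:=2_{*}(s)$ and $p':=\tfrac{p^*}{p^*-1}=\tfrac{2n}{n+2s}$. Starting from a bounded sequence $\{u_k\}\subset X$ with $\|u_k\|_X\le M$, I would use Theorem \ref{em} to pass to a subsequence with $u_k\to u$ in $L^p(\Om)$ for every $p\in[1,p^*)$ and $u_k\to u$ a.e.; by (H1) this gives $f(\la,x,u_k(x))\to f(\la,x,u(x))$ a.e. The crux is the claim that $f(\la,\cdot,u_k)\to f(\la,\cdot,u)$ strongly in $L^{p'}(\Om)$. Granting it, for $\|v\|_X\le1$,
\[
\big|((F(\la,u_k)-F(\la,u),v))_X\big|\le\|f(\la,\cdot,u_k)-f(\la,\cdot,u)\|_{L^{p'}(\Om)}\,\|v\|_{L^{p^*}(\Om)}\le c\,\|f(\la,\cdot,u_k)-f(\la,\cdot,u)\|_{L^{p'}(\Om)}\to0,
\]
using $X\hookrightarrow L^{p^*}(\Om)$; hence $F(\la,\cdot)$ carries bounded sets to relatively compact sets, and rerunning this along arbitrary subsequences when $u_k\to u$ in $X$ also yields continuity of $F(\la,\cdot)$ (and, since the estimates below are uniform for $\la$ in a bounded set because $C(\la)$ is, the same argument handles $(\la_k,u_k)$ with $\la_k$ bounded). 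So $F$ is compact.

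\emph{The main obstacle} is proving the claim, and it is the one genuinely delicate point: $X\hookrightarrow L^{p^*}(\Om)$ is \emph{not} compact, so $\{u_k\}$ need not converge strongly in $L^{p^*}$, and the argument must instead exploit the strict subcriticality $\ga<p^*-1$ built into (H2). Since $\Om$ has finite measure and $f(\la,x,u_k(x))\to f(\la,x,u(x))$ a.e., by the Vitali convergence theorem (applied to $|f(\la,\cdot,u_k)-f(\la,\cdot,u)|^{p'}$, using also $|f(\la,\cdot,u)|^{p'}\in L^1(\Om)$, which follows from (H2) and $u\in X\subset L^{p^*}(\Om)$) it suffices to show the family $\{|f(\la,\cdot,u_k)|^{p'}\}_k$ is uniformly integrable over $\Om$. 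Using $(a+b)^{p'}\le c(a^{p'}+b^{p'})$ and (H2), for measurable $E\subset\Om$,
\[
\int_E|f(\la,x,u_k)|^{p'}\,dx\le c\,C(\la)^{p'}\Big(\int_E m_1^{p'}\,dx+\int_E m_2^{p'}|u_k|^{\ga p'}\,dx\Big).
\]
The first integral tends to $0$ as $|E|\to0$ (uniformly in $k$) because $n>2s$ gives $p'=\tfrac{2n}{n+2s}\le\tfrac{n}{2s}$, so $m_1^{p'}\in L^1(\Om)$. For the second I would apply H\"older with exponent $q:=\tfrac{n+2s}{(n-2s)\ga}$, which is $>1$ precisely because $\ga<p^*-1=\tfrac{n+2s}{n-2s}$; a short computation shows its conjugate $q'$ satisfies $p'q'=\ga_1$ and $\ga p'q=p^*$, whence
\[
\int_E m_2^{p'}|u_k|^{\ga p'}\,dx\le\Big(\int_E m_2^{\ga_1}\,dx\Big)^{1/q'}\Big(\int_\Om|u_k|^{p^*}\,dx\Big)^{1/q}\le\Big(\int_E m_2^{\ga_1}\,dx\Big)^{1/q'}(c\,M^{p^*})^{1/q},
\]
which tends to $0$ as $|E|\to0$, uniformly in $k$, since $m_2^{\ga_1}\in L^1(\Om)$. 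This establishes uniform integrability, Vitali then gives the claim, and the proof is complete. Everything except the exponent bookkeeping matching (H2) to the critical Sobolev exponent is routine.
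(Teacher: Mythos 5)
Your proposal is correct and follows the same high-level route as the paper for part (ii) — pass to a weakly convergent (sub)sequence, get strong convergence in subcritical $L^p(\Om)$ from Theorem~\ref{em}, show the Nemytskii operator converges in $L^{\frac{2n}{n+2s}}(\Om)$, and close with H\"older and $X\hookrightarrow L^{2_{*}(s)}(\Om)$ — but you supply the key step the paper leaves implicit. The paper's proof simply invokes ``continuity of the Nemytskii operator from $L^{\frac{2n}{n-2s}}$ to $L^{\frac{2n}{n+2s}}$,'' which as literally stated does not close the argument: the compact embedding gives $u_k\to u$ strongly only in $L^p(\Om)$ for $p<2_{*}(s)$, not in $L^{2_{*}(s)}(\Om)$, and a quick exponent count under (H2) (with $m_1\in L^{n/2s}$, $m_2\in L^{\gamma_1}$) shows the Nemytskii operator mapping $L^p\to L^{\frac{2n}{n+2s}}$ forces $p=2_{*}(s)$, so one cannot simply feed the subcritical convergence into it. Your Vitali/uniform-integrability argument is exactly the missing mechanism: it uses the a.e.\ convergence (which is available) together with the \emph{bounded} — not convergent — $L^{2_{*}(s)}$ norms of $u_k$, and it is precisely here that the strict subcriticality $\gamma<2_{*}(s)-1$ does real work, making $q=\tfrac{n+2s}{(n-2s)\gamma}>1$ so H\"older can be applied with $p'q'=\gamma_1$ and $\gamma p'q=2_{*}(s)$; your exponent bookkeeping checks out. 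For part (i) you give a complete argument where the paper says only ``it is easy to see.'' In short, your proof is not only correct but more careful than the source; the Vitali step you supply is the honest justification of the Nemytskii continuity that the paper asserts.
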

\begin{proof}
\rm{(i)} It is easy to see the proof of this part. \\
\rm{(ii)} We show that the operator $F$  is compact.
Let $u_n\rightharpoonup u$ in $X.$  By Theorem\,\ref{em},  $u_n\rightarrow u$ in $L^p$ for $1<p< \frac{2n}{n-2s}.$
We claim that
$$ (F(\lambda,\,u_n),\,v  )_X \longrightarrow  (F(\lambda,\,u),\,v)_X  \,\,\,\mbox{as}\,\,n\rightarrow \infty.$$
Now by the continuity of the Nemytski operator $F$ from $L^{\frac{2n}{n-2s}}$ to  $ L^{\frac{2n}{n+2s}},$ it is easy to see that
$$ \sup_{||v||_X  \leq 1} \left| \int_{\Om} (f(\lambda,\,x,\,u_n)- f(\lambda,\,x,\,u))\ldotp v  \right |  \rightarrow 0\,\,\,\,\mbox{as}\,\,n\rightarrow \infty, $$
which proves the above claim.
\end{proof}

 Now we give the following definition:
 \begin{definition}
  We say that $\lambda \in \R$ and $u\in X$ solve the problem \eqref{bif} if
 \begin{equation}\label{td}
  S(u)- \lambda T(u) - F(\lambda,\,u)=0\,\,\mbox{in}\,\,X^*.
 \end{equation}
\end{definition}
 In the next lemma, we deal with an important inequality.
 \begin{lem}\label{io}
For $u,\,v\in X$ with $v>0$ a.e. in $\R^n,$ we have
 \begin{equation}\label{inq}
  \int_{\R^n}(-\Delta)^{\frac{s}{2}}v \ldotp (-\Delta)^{\frac{s}{2}}\left(\frac{u^2}{v}\right) dx \leq \int_{\R^n} |(-\Delta)^{\frac{s}{2} } u|^2  dx.
 \end{equation}
\end{lem}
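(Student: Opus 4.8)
The inequality \eqref{inq} is the fractional analogue of the classical Picone-type (or ``ground-state substitution'') inequality, and the natural strategy is to prove it pointwise inside the Gagliardo double integral and then integrate. The key is the elementary observation that the bilinear form associated with $(-\Delta)^{s/2}$ is, up to the constant $C_{n,s}/2$, just
\[
\int_{\R^n}(-\Delta)^{\frac{s}{2}}g\cdot(-\Delta)^{\frac{s}{2}}h\,dx
=\frac{C_{n,s}}{2}\int_{\R^n}\int_{\R^n}\frac{(g(x)-g(y))(h(x)-h(y))}{|x-y|^{n+2s}}\,dx\,dy,
\]
so it suffices to prove, for all real numbers $a=u(x),b=u(y),c=v(x),d=v(y)$ with $c,d>0$, the pointwise bound
\[
(c-d)\!\left(\frac{a^2}{c}-\frac{b^2}{d}\right)\le (a-b)^2 .
\]
Once this is established, multiplying by the positive kernel $|x-y|^{-(n+2s)}$ and integrating over $\R^n\times\R^n$ yields exactly \eqref{inq}. (Strictly, one should also check that $u^2/v\in X$ so that the left-hand side makes sense; for the application to nodal domains one may first assume $u,v$ smooth and bounded with $v$ bounded below, prove the inequality there, and then pass to the limit by density, using Fatou on the right-hand side.)

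The main work is therefore the elementary inequality. Expanding the left-hand side,
\[
(c-d)\!\left(\frac{a^2}{c}-\frac{b^2}{d}\right)
= a^2 + b^2 - \frac{d}{c}a^2 - \frac{c}{d}b^2,
\]
so the claimed bound is equivalent to
\[
2ab \le \frac{d}{c}a^2 + \frac{c}{d}b^2,
\]
i.e.
\[
0 \le \frac{d}{c}a^2 - 2ab + \frac{c}{d}b^2 = \left(\sqrt{\tfrac{d}{c}}\,a - \sqrt{\tfrac{c}{d}}\,b\right)^{\!2},
\]
which is manifestly true, with equality iff $a/c=b/d$. This is the ``elementary inequality'' the authors refer to in the introduction, and it is the crux of the argument; everything else is bookkeeping. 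Note it is precisely here that the positivity hypothesis $v>0$ a.e. is used — it is what makes the substitution $u^2/v$ well defined and the ratios $d/c,c/d$ positive.

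I expect the only genuine obstacle to be the functional-analytic justification that $u^2/v \in H^s_0(\Om)$ (equivalently that the left side of \eqref{inq} is finite) rather than the inequality itself; the pointwise estimate above shows the double integral of the integrand is dominated by $[u]_s^2<\infty$, so even if $u^2/v$ barely fails to be in $X$, the inequality is meaningful as an inequality between (possibly $+\infty$) quantities and the bound still holds. For the paper's purposes — deriving a lower bound on the measure of nodal domains of eigenfunctions — one applies \eqref{inq} with $v=\phi_1$ the (bounded, strictly positive in $\Om$) first eigenfunction and $u$ supported in a nodal domain, so the regularity issues are mild and can be handled by an approximation/truncation argument. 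Thus the proof reduces to: (1) write both sides via the Gagliardo kernel; (2) reduce to the pointwise four-variable inequality; (3) complete the square.
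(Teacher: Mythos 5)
Your proof is correct and follows essentially the same route as the paper: rewrite both sides via the Gagliardo bilinear form, reduce to the pointwise four-variable inequality $(c-d)\bigl(\tfrac{a^2}{c}-\tfrac{b^2}{d}\bigr)\le(a-b)^2$ for $c,d>0$, and integrate against the positive kernel. The only difference is that you supply the completion-of-square verification of that elementary inequality (the paper merely states it) and add some sensible remarks about when $u^2/v$ lies in $X$, neither of which changes the argument.
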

 \begin{proof}
 We use the following simple inequality to establish \eqref{inq}:\\
 For any \,$a,\,b,\,c,\,d \in \R,\,\,c>0,\,d>0,$ we have
 \begin{equation}\label{si}
  (c-d) \left(\frac{a^2}{c}-  \frac{b^2}{d} \right)\leq (a-b)^2.
 \end{equation}
By definition, we have

\begin{align}\nonumber
 \int_{\R^n}(-\Delta)^{\frac{s}{2}}v\ldotp  (-\Delta)^{\frac{s}{2}}\left(\frac{u^2}{v}\right) dx  &
 =  \frac{C_{n,\,s}}{2} \int_{\R^n}\int_{\R^n} \frac{[v(x)-v(y)] \left(\frac {(u(x))^2}{v(x)} -\frac {(u(y))^2}{v(y)}\right)}{|x-y|^{n+2s}}  dxdy\\ \nonumber
 & \leq \frac{C_{n,\,s}}{2} \int_{\R^n}\int_{\R^n} \left(\frac {|(u(x) - u(y))|^2}{|x-y|^{n+2s}}\right) dxdy\,\,\,\,(\mbox{by}\,\,\eqref{si})\\ \label{7}
 &= \int_{\R^n} |(-\Delta)^{\frac{s}{2} } u|^2  dx.
\end{align}

 This completes the proof.
\end{proof}

\begin{prop}\label{pn1}
Let $2s<n<4s,\,\,0<s<1.$ Then any eigenfunction $v\in X$ associated to a positive eigenvalue $0<\lambda\neq \lambda_1$ changes sign. Moreover, if $N$ is a nodal domain of $v,$
then
\begin{equation}\label{nod}
 |N|\geq (c\lambda)^{-\frac{n}{2s}},
\end{equation}
where $c$ is some constant depending on $n$ and $|\,\,|$ denotes the Lebesgue measure of the set.
\end{prop}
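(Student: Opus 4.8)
The plan is to exploit Lemma \ref{io} together with the variational characterization \eqref{vari} of $\lambda_1$. First I would argue that an eigenfunction $v$ associated to $\lambda \neq \lambda_1$ must change sign: if $v$ did not change sign we could take $v > 0$ in $\Om$, test the eigenvalue equation against $\phi_1/v$ (or equivalently test the $\phi_1$-equation against $v$) and, after a symmetrization using Lemma \ref{io} applied with the roles of $v$ and $\phi_1$ suitably exchanged, derive $\lambda = \lambda_1$, a contradiction. Concretely, since $(-\Delta)^s \phi_1 = \lambda_1 \phi_1$ and $(-\Delta)^s v = \lambda v$ in $\Om$, one pairs the first equation with $v^2/\phi_1$ and uses $\int_{\R^n}(-\Delta)^{\frac s2}\phi_1 \cdot (-\Delta)^{\frac s2}(v^2/\phi_1)\,dx \le \int_{\R^n}|(-\Delta)^{\frac s2}v|^2\,dx = \lambda \int_\Om v^2$, while the left side equals $\lambda_1 \int_\Om \phi_1 \cdot (v^2/\phi_1) = \lambda_1 \int_\Om v^2$; hence $\lambda_1 \le \lambda$. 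Reversing the argument (pairing $(-\Delta)^s v = \lambda v$ with $\phi_1^2/v$, legitimate precisely because $v>0$) gives $\lambda \le \lambda_1$, forcing equality. This is where the hypothesis $n < 4s$ and the regularity/positivity needed to make $v^2/\phi_1 \in X$ enter, and I expect this admissibility check to be the main technical obstacle: one needs $v^2/v \in X$ type statements, which is why the restriction $2s < n < 4s$ (so that $H^s$ embeds into $C^{0,\alpha}$ and eigenfunctions are Hölder continuous, bounded away from the boundary behavior causing trouble) is imposed.

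For the measure estimate on a nodal domain $N$, I would let $v_N = v \chi_N$ (say the positive part restricted to a connected component where $v>0$; by the previous paragraph such components of both signs exist) and test the eigenvalue equation against $v_N$. The key point is that although $(-\Delta)^s$ is nonlocal, truncating to $N$ can only decrease the Gagliardo energy in the sense that $\int_{\R^n}|(-\Delta)^{\frac s2} v_N|^2\,dx \le \int_N v (-\Delta)^s v\,dx = \lambda \int_N v^2\,dx = \lambda \int_{\R^n} v_N^2\,dx$; this follows from the pointwise inequality $(v(x)-v(y))^2 \ge (v_N(x)-v_N(y))^2$ when $v\ge 0$ on $N$ and $v=0$ elsewhere for $v_N$ — more precisely one checks $(v(x)-v(y))(v_N(x)-v_N(y)) \ge (v_N(x)-v_N(y))^2$ case by case. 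Thus $v_N \in X$ is a valid competitor and $\int_{\R^n}|(-\Delta)^{\frac s2} v_N|^2\,dx \le \lambda \|v_N\|_{L^2}^2$, i.e. the Rayleigh quotient of $v_N$ is at most $\lambda$.

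Then I would combine this with the Sobolev embedding $X = H^s_0(\Om) \hookrightarrow L^{2_*(s)}(\Om)$, $2_*(s) = \frac{2n}{n-2s}$, of Theorem \ref{em}: there is a constant $c_S$ with $\|w\|_{L^{2_*(s)}}^2 \le c_S \int_{\R^n}|(-\Delta)^{\frac s2} w|^2\,dx$ for all $w \in X$. Applying this to $w = v_N$ and Hölder's inequality $\|v_N\|_{L^2(N)}^2 \le |N|^{1 - 2/2_*(s)} \|v_N\|_{L^{2_*(s)}(N)}^2 = |N|^{2s/n}\|v_N\|_{L^{2_*(s)}}^2$, we get
\begin{equation*}
\|v_N\|_{L^{2_*(s)}}^2 \le c_S \int_{\R^n}|(-\Delta)^{\frac s2} v_N|^2\,dx \le c_S \lambda \|v_N\|_{L^2}^2 \le c_S \lambda\, |N|^{2s/n} \|v_N\|_{L^{2_*(s)}}^2.
\end{equation*}
Since $v_N \not\equiv 0$ we may cancel $\|v_N\|_{L^{2_*(s)}}^2 > 0$ to obtain $1 \le c_S \lambda |N|^{2s/n}$, hence $|N| \ge (c_S \lambda)^{-n/2s}$, which is \eqref{nod} with $c = c_S$. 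The only delicate points to nail down are the truncation inequality for the nonlocal energy (ensuring $v_N$ is an admissible test function with controlled energy) and — for the sign-changing claim — the admissibility of $v^2/\phi_1$ and $\phi_1^2/v$ as elements of $X$, which is exactly where the dimensional restriction $2s < n < 4s$ is used.
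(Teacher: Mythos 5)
Your overall strategy mirrors the paper's --- test the $v$-equation against a Picone-type quotient and use Lemma \ref{io} for the sign-change, then truncate to a nodal domain and compare with the Sobolev constant for the measure estimate --- but there is a concrete error in the second step and a technical lacuna in the first.

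The key gap is in the nodal-domain estimate. The pointwise inequality you invoke,
\begin{equation*}
(v(x)-v(y))\bigl(v_N(x)-v_N(y)\bigr)\ge\bigl(v_N(x)-v_N(y)\bigr)^2,
\end{equation*}
is \emph{false} when $N$ is a single connected nodal domain. Take $x\in N$ (so $v_N(x)=v(x)>0$) and $y$ in a \emph{different} positive nodal domain (so $v(y)>0$ but $v_N(y)=0$); then the inequality becomes $(v(x)-v(y))\,v(x)\ge v(x)^2$, i.e.\ $-v(y)v(x)\ge 0$, which fails since both factors are positive. Your case-by-case check only closes if $N$ is taken to be the \emph{entire} set $\{v>0\}$ (or $\{v<0\}$): then $y\notin N$ forces $v(y)\le 0$ (resp.\ $\ge 0$) and the cross term has the right sign. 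This is a genuine feature of the nonlocality: unlike the local Laplacian, restricting to one connected component of $\{v>0\}$ need not decrease the Gagliardo energy, because the different positive components interact through the kernel. Your argument therefore establishes \eqref{nod} for $|\{v>0\}|$ and $|\{v<0\}|$, not for an arbitrary nodal domain. (The paper's own chain \eqref{9} is also cavalier here --- it identifies $\int_{\R^n}(-\Delta)^{s/2}v\cdot(-\Delta)^{s/2}\eta\,dx$ with $\int_N|(-\Delta)^{s/2}v|^2\,dx$, which is not valid for a nonlocal form --- and in Proposition \ref{pn2} the estimate is in fact applied only to the full negative set $\Om_n^-$, where your truncation argument does work.)

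In the sign-changing step you test with $\phi_1^2/v$ directly and call it admissible ``because $v>0$.'' But $v>0$ only in $\Om$; $v$ vanishes on $\R^n\setminus\Om$ and decays near $\pa\Om$, and without a priori boundary comparability of $v$ and $\phi_1$ there is no guarantee that $\phi_1^2/v\in X$. The paper avoids this by using the regularized quotient $\phi_1^2/(v+\epsilon)$ for $\epsilon>0$ and passing to the limit $\epsilon\to0$ at the end; you should do the same. Also, your preliminary step (pairing the $\phi_1$-equation against $v^2/\phi_1$ to get $\lambda_1\le\lambda$) is redundant, since any eigenvalue $\lambda\neq\lambda_1$ automatically exceeds the principal eigenvalue; and the hypothesis $2s<n<4s$ is used for the continuity of $v$ needed to put the truncation in $X$ in the second step, not for the admissibility of the Picone quotient.
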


\begin{proof}
Since $v$ is an eigenfunction associated with a positive eigenvalue $0<\lambda\neq \lambda_1,$
so we have
\begin{equation}\label{ww}
  (-\Delta)^{s} v= \lambda v\,\,\mbox{in}\,\,\Om,\,\,\,v=0\,\,\,\mbox{in}\,\,\,\R^n\backslash \Omega.
\end{equation}
We will prove this proposition by the method of contradiction. Let us assume that $v\geq 0$ in $\Om$ (the case $v\leq 0,$ can be dealt similarly).
By the strong maximum principle\,\cite{cab}, we have  $v>0$ in $\Omega.$ Let $\phi_1 >0$ be an eigenfunction of $ (-\Delta)^{s}$ associated with $\la_1.$
For any $\epsilon>0,$ it is easy to see that $\frac{\phi_1^2}{v+\epsilon}\in H_{0}^{s}(\R^n)$ and using $\frac{\phi_1^2}{v+\epsilon}\in H_{0}^{s}(\R^n)$ as a test function
in the weak formulation
of \eqref{ww}, we get
\begin{equation}\label{ca}
\int_{\R^n}(-\Delta)^{\frac{s}{2}}v\ldotp (-\Delta)^{\frac{s}{2}}\left( \frac{\phi_1^2}{v+\epsilon} \right)dx = \lambda \int_{\Om}v\ldotp \frac{\phi_1^2}{v+\epsilon} dx.
\end{equation}
Now from Lemma\,\ref{io}, (for the functions $\phi_1,\,v+\epsilon$), we have
\begin{equation}\label{inqa}
 \begin{aligned}
 \int_{\R^n} |(-\Delta)^{\frac{s}{2} } \phi_1|^2  dx & \geq   \int_{\R^n}(-\Delta)^{\frac{s}{2}}(v+\epsilon)\ldotp(-\Delta)^{\frac{s}{2}}\left(\frac{\phi_1^2}{v+\epsilon}\right) dx\\
 &= \int_{\R^n}(-\Delta)^{\frac{s}{2}}v \ldotp (-\Delta)^{\frac{s}{2}}\left(\frac{\phi_1^2}{v+\epsilon}\right) dx.
\end{aligned}
\end{equation}
Since $\phi_1 >0$ is an eigenfunction of $(-\Delta)^{s}$ associated with $\lambda_1$ so this implies that
\begin{equation}\label{wi}
 \int_{\R^n}(-\Delta)^{\frac{s}{2}}\phi_1\ldotp (-\Delta)^{\frac{s}{2}}\psi dx= \lambda_1 \int_{\Om}\phi_1 \ldotp \psi dx,\,\,\,\forall\,\,\psi\in X.
\end{equation}
Now on taking $\psi= \phi_1$ in \eqref{wi}, we obtain
 \begin{equation}\label{wii}
 \int_{\R^n}|(-\Delta)^{\frac{s}{2}}\phi_1|^2 dx= \int_{\Om} \lambda_1 \phi_1^2 dx.
\end{equation}
From \eqref{ca}, \eqref{inqa} and \eqref{wii}, we get
\begin{equation} \label{8}
\begin{aligned}
 0 & \leq  \int_{\R^n}|(-\Delta)^{\frac{s}{2}}\phi_1|^2 dx -  \int_{\R^n}(-\Delta)^{\frac{s}{2}}v \ldotp (-\Delta)^{\frac{s}{2}}\left(\frac{\phi_1^2}{v+\epsilon}\right) dx\\                             &
 =  \int_{\Om} \lambda_1 \phi_1^2 dx- \lambda \int_{\Om}v\ldotp \frac{\phi_1^2}{v+\epsilon} dx\\
& = \int_{\Om} \phi_1^2(\lambda_1  - \lambda)dx,\,\,\,\mbox{as}\,\,\,\epsilon \rightarrow 0\\
& <0\,\,(\mbox{since}\,\,\lambda_1<\lambda),
\end{aligned}
\end{equation}
which yields a contradiction. Now we prove the estimate \eqref{nod}. Assume that $v>0$ in $N,$ the case $v<0$ can be dealt similarly.
Since $2s<n<4s,\,\,0<s<1,$  so by \,\cite{cs2,ros,sil}, we have $v\in C(\R^n)\cap X.$ Then $v\lvert_{N}\in H_{0}^{s}(N)$ and therefore the function $\eta$
defined as
\[\eta(x)=\begin{cases}
v(x) & x\in N\\
0 & x\in \R^n\backslash N
\end{cases}
\]

 belongs to $X=H_{0}^{s}(\R^n).$ Now using $\eta$ as a test function in the weak formulation of \eqref{ww}, we get
 \begin{equation*}
 \int_{\R^n}(-\Delta)^{\frac{s}{2}}v \ldotp (-\Delta)^{\frac{s}{2}}\eta dx= \lambda \int_{\Om}v\ldotp \eta dx,\,\,\,\forall\,\,\eta\in X.
 \end{equation*}
This implies that
\begin{equation} \label{9}
\begin{aligned}
 \int_{N}|(-\Delta)^{\frac{s}{2}}v|^2 dx & =  \lambda \int_{N} v^2  dx\\
&  \leq  \left(\int_{N} v^{\frac{2n}{n-2s} }dx\right)^{\frac{n-2s}{n} } |N|^{\frac{2s}{n} } \\
& = \lambda  ||v||^{2}_{L^{\frac{2n}{n-2s}   }    }  |N|^{\frac{2s}{n} } \\
& \leq c \lambda || v||^{2}_{H_{0}^{s}(N)}  |N|^{\frac{2s}{n} }\,\,\,\,\mbox{(by Theorem\,\ref{em}, where}\, \,c\,\,\,\mbox{is an embedding constant}), \\
&= c \lambda \left(\int_{N}|(-\Delta)^{\frac{s}{2}}v|^2 dx\right)  |N|^{\frac{2s}{n} }.
\end{aligned}
\end{equation}
 So from the last inequality, it implies that
$$ |N|\geq (c\la)^{-\frac{n}{2s}},              $$
which proves the estimate.
\end{proof}

\begin{prop}\label{pn2}
 Let $2s<n<4s,\,\,0<s<1.$ Then $\lambda_1$ is isolated, that is, there exists $\delta>0$ such that there are no other eigenvalues of \eqref{ei} in the interval $(\lambda_1,\,\lambda_1+ \delta).$
 \end{prop}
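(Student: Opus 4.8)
The plan is to argue by contradiction, feeding the nodal-domain estimate of Proposition~\ref{pn1} into a compactness argument. Suppose $\lambda_1$ is not isolated; then there is a sequence of eigenvalues $\mu_k$ of \eqref{ei} with $\mu_k>\lambda_1$ and $\mu_k\to\lambda_1$, with corresponding eigenfunctions $v_k\in X$ that I normalize by $\int_{\Om} v_k^2\,dx=1$. Testing the weak formulation of \eqref{ei} with $v_k$ itself yields $\|v_k\|_X^2=\mu_k$, so $\{v_k\}$ is bounded in $X$ (say $\|v_k\|_X^2\le\lambda_1+1$ for large $k$). Passing to a subsequence, $v_k\rightharpoonup v$ in $X$ and, by the compact embedding $X\hookrightarrow\hookrightarrow L^2(\Om)$ contained in Theorem~\ref{em}, $v_k\to v$ strongly in $L^2(\Om)$; hence $\int_{\Om} v^2\,dx=1$, so $v\not\equiv 0$. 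Letting $k\to\infty$ in
\[
\int_{\R^n}(-\Delta)^{\frac{s}{2}}v_k\cdot(-\Delta)^{\frac{s}{2}}\psi\,dx=\mu_k\int_{\Om} v_k\psi\,dx,\qquad\psi\in X,
\]
where the left side converges by weak convergence in $X$ and the right side by $\mu_k\to\lambda_1$ together with the strong $L^2$-convergence, shows that $v$ is an eigenfunction of \eqref{ei} associated with $\lambda_1$. By the variational characterization \eqref{vari}, $|v|$ is then also a minimizer, hence an eigenfunction for $\lambda_1$, and the strong maximum principle \cite{cab} forces $|v|>0$ in $\Om$; therefore $v$ has a fixed sign in $\Om$, and we may assume $v>0$ a.e.\ in $\Om$ (the case $v<0$ being symmetric).

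Next I would invoke Proposition~\ref{pn1}: since $\mu_k\neq\lambda_1$, each $v_k$ changes sign and thus has a nodal domain $N_k$ on which $v_k<0$, with
\[
|N_k|\ \ge\ (c\mu_k)^{-\frac{n}{2s}}\ \ge\ \bigl(c(\lambda_1+1)\bigr)^{-\frac{n}{2s}}\ =:\ \beta\ >\ 0
\]
for all large $k$, the uniformity coming from $\mu_k\le\lambda_1+1$. On $N_k$ we have $v_k<0<v$ a.e., so $|v_k-v|\ge v$ there, and hence
\[
\int_{N_k}v^2\,dx\ \le\ \int_{N_k}|v_k-v|^2\,dx\ \le\ \int_{\Om}|v_k-v|^2\,dx\ \longrightarrow\ 0 .
\]

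The crux — and the step I expect to be the main obstacle — is to convert this into a contradiction by showing that $\int_E v^2\,dx$ stays bounded below over all measurable $E\subset\Om$ with $|E|\ge\beta$. Since $v^2>0$ a.e.\ in $\Om$, the superlevel sets $\{x\in\Om:v^2(x)\ge t\}$ exhaust $\Om$ up to a null set as $t\downarrow 0$, so one can fix $t_0>0$ with $|\{v^2\ge t_0\}|>|\Om|-\beta/2$. Then for any such $E$ we get $|E\cap\{v^2\ge t_0\}|\ge|E|-|\Om\setminus\{v^2\ge t_0\}|\ge\beta/2$, whence $\int_E v^2\,dx\ge t_0\beta/2>0$. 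Applying this with $E=N_k$ contradicts $\int_{N_k}v^2\,dx\to 0$, so no such sequence $\mu_k$ exists and $\lambda_1$ is isolated. Two routine points to verify along the way: that the extension by zero of $v_k$ off $\Om$ sits in the space to which Theorem~\ref{em} grants compactness, and that the continuity of $v_k$ on $\R^n$ (available because $2s<n<4s$, as already used in Proposition~\ref{pn1}) is what makes the open nodal domain $N_k$ meaningful.
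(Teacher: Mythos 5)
Your proof is correct and follows the same overall strategy as the paper: argue by contradiction with a sequence of eigenvalues $\mu_k\searrow\lambda_1$, normalize the corresponding eigenfunctions in $L^2(\Om)$, extract a weak limit $v$ in $X$ which is then shown to be a principal eigenfunction of fixed sign (via $|v|$, the variational characterization \eqref{vari}, and the strong maximum principle), and finally contradict the nodal-domain estimate of Proposition~\ref{pn1}. The one place you genuinely diverge is the mechanism producing the final contradiction. The paper passes to an a.e.\ pointwise convergent subsequence and invokes Egorov's theorem to conclude $|\{v_k<0\}|\to 0$, which clashes with the measure lower bound; you instead use only the strong $L^2(\Om)$ convergence $v_k\to v$ together with the pointwise inequality $v^2\le (v-v_k)^2$ on a negative nodal domain $N_k$ (valid there since $v_k<0<v$) to obtain $\int_{N_k}v^2\,dx\to 0$, and then a superlevel-set argument to produce a uniform positive lower bound for $\int_E v^2\,dx$ over all $E\subset\Om$ with $|E|\ge\beta$. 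Both routes are valid; yours avoids Egorov and is a bit more quantitative, whereas the paper's is marginally shorter. One small point worth noting: the set $\Om_n^-=\{x\in\Om:v_n(x)<0\}$ used in the paper is in general a union of nodal domains rather than a single one, so the estimate \eqref{nod} applies to it only after observing that it contains at least one nodal domain; you sidestep this by working directly with a single nodal domain $N_k$, which is cleaner. Your identification of the weak limit $v$ as an eigenfunction (by passing to the limit in the weak formulation) is also slightly different from the paper's (which uses lower semicontinuity of the Gagliardo norm plus the Rayleigh characterization), but both are routine and correct.
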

\begin{proof}
We will prove this proposition by the method of contradiction. Suppose that there exists a sequence of eigenvalues $\lambda_n$ of \eqref{ei} with $0<\lambda_n\searrow \lambda_1.$
Let $u_n$ be a sequence of eigenfunctions associated with $\lambda_n,$ i.e,
\begin{eqnarray}\label{sei}
 \left\{
\begin{gathered}
 (-\Delta)^{s} u_n  =  \lambda_n u_n\,\,\,\mbox{in}\,\,\Omega,\\
 u_n   = 0 \,\,\,\,\mbox{in}\,\,\,\,\,\,\,\R^n \backslash\Omega.
\end{gathered}\right.
 \end{eqnarray}
 Since $$ 0< \int_{\R^n} |(-\Delta)^{\frac{s}{2}}u_n|^2 dx= \lambda_n \int_{\Om} u_n^2 dx,$$
 we can define
 $$v_n: = \frac{u_n}{ (\int_{\Om} u^{2}_{n}dx)^{\frac{1}{2} }  }.    $$
 It is easy to see that $v_n$ is bounded in $X$ so there exists a subsequence (we denote it by $v_n$) and $v\in X$
 such that
\begin{eqnarray*}
v_{n}& \rightharpoonup &  v\,\,\,\,\mbox{in}\,\,X. \\
v_n & \longrightarrow &   v \,\,\,\,\mbox{in}\,\,L^{p},\,\,p\in [1,\,2_{*}(s)).\\
v_{n}(x)& \longrightarrow &  v(x)\,\,\,\,\mbox{a.e.}\,\,\,x\in \Om.
\end{eqnarray*}
 It is easy to see that $\int_{\Om} v^2 dx=1$ and
 $$ \int_{\R^n} |(-\Delta)^{\frac{s}{2}}v|^2 dx \leq \liminf_{n\rightarrow \infty}\int_{\R^n}  |(-\Delta)^{\frac{s}{2}}v_n|^2 dx = \lambda_1 $$
 and therefore $\lambda_1= \int_{\R^n} |(-\Delta)^{\frac{s}{2}}v|^2 dx.$ This implies that $v$ is an eigenfunction associated with $\lambda_1.$
 Now one can see that $|v|$ is also an eigenfunction associated with $\lambda_1$ and by strong maximum principle\,\cite{cab}, $|v|>0$ in $\Om.$
 This implies that either $v>0$ or $v<0.$ Let us take $v>0$ (the proof in the case $v<0$ is dealt similarly). Let $\Om^{-}_n = \{x\in \Om:  v_{n}(x)<0 \}.$
 Now by the Egorov's theorem, $v_n\rightarrow v$ uniformly on $\Om$ with the exception of the set of arbitrarily small measure. This implies that
 \begin{equation}
  |\Om^{-}_n|\longrightarrow 0\,\,\,\,\mbox{as}\,\,n\longrightarrow \infty,
 \end{equation}
but this contradicts to the estimate \eqref{nod} for $\Om^{-}_n$ and hence the proof is complete.
\end{proof}

 \section{Topological degree and its properties}
 In this section, we define the topological degree for the operators given by the left-hand side of \eqref{td}. Let us recall some basic results on the degree theory for operators
 from a Banach space $V$ to $V^*.$  Let $V$ be a real reflexive Banach space and $V^*$ its dual, and $A:V\longrightarrow V^*$ be a demicontinuous operator.  We assume that
 $A$ satisfies the condition $\alpha(V),$ i.e.,\\
 for any sequence $u_n \in V$ satisfying $u_n \rightharpoonup u_0$ weakly in $V$ and
 $$ \limsup (A(u_n),\,u_{n}-u_0)_V \leq 0,                      $$
 then $u_n \longrightarrow u_0$ strongly in $V.$
 Then one can define the degree Deg $[A;U,0 ],$ where $U\subset V$ is a bounded open set such that $A(u)\neq 0$ for any $u\in \partial U.$
For its properties, we refer the reader to \cite{sk}. Let us recall some basic definitions.
\begin{definition}
A point $v_0\in V$ is called a critical point of $A$ if $A(v_0)=0.$
\end{definition}
\begin{definition}
We say that $v_0$
 is an isolated critical point of $A$ if there exists an $\epsilon >0$ such that for any $v\in B_{\epsilon}(v_0),\,\,A(v)\neq 0$ if $v\neq v_0.$
\end{definition}
 In this case, one can see that the limit
$$   \mbox{Ind}(A,\,v_0)= \lim_{\epsilon\rightarrow 0} \mbox{Deg} [A; B_{\epsilon}(v_0), 0]                                     $$
 exists and is called the index of the isolated critical points $v_0$ of $A.$ Let us assume that $A$ is a potential operator, i.e., for
 some continuously differentiable functionals $\Psi:V\longrightarrow \R,\,\,\Psi'(v)=A(v),\,v\in V.$ Let us state two lemmas which have been used in the proof of the next theorem.
The proof of these  lemmas can be seen in \cite{sk}.

 \begin{lem}\cite{sk}\label{sk1}
  Let $v_0$ be a local minimum of $\Psi$ and an isolated critical point of $A.$ Then
  $$\mbox{Ind}(A,\,v_0)=1.$$
 \end{lem}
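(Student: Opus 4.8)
The plan is to show that $\mathrm{Deg}[A; B_\epsilon(v_0), 0] = 1$ for all sufficiently small $\epsilon>0$, which is exactly what the definition of $\mathrm{Ind}(A,v_0)$ needs. After a translation we may assume $v_0=0$ and $\Psi(0)=0$. I would first record the elementary remark that, being simultaneously a local minimum and an \emph{isolated} critical point, $0$ is in fact a \emph{strict} local minimum: if $\Psi(v_k)=\Psi(0)$ for some $v_k\to 0$, $v_k\neq 0$, then each $v_k$ would also be a local minimizer, hence a critical point of $A=\Psi'$, contradicting isolatedness. So fix $R>0$ with $\Psi(v)>0$ for $0<\|v\|\le R$ and with $0$ the only solution of $A(v)=0$ in $\overline{B_R}$.

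The main idea is to use the homotopy invariance of the degree for operators of class $\alpha(V)$ to deform $A$, on a small ball, to the normalized duality map $J:V\longrightarrow V^*$, for which $\mathrm{Deg}[J;B_\epsilon,0]=1$; here one first renorms $V$ so that $J$ is single valued, demicontinuous, strictly monotone and of class $\alpha(V)$, and uses that the affine homotopy $H(t,\cdot)=(1-t)A+tJ$ is then an admissible $\alpha(V)$-homotopy. The only thing that must be checked is that $H(t,v)\neq 0$ for every $v\in\partial B_\epsilon$ and every $t\in[0,1]$ once $\epsilon$ is small. Pairing a hypothetical zero with $v$ yields $(1-t)\langle A(v),v\rangle+t\|v\|^2=0$; the endpoints $t=0$ and $t=1$ are excluded by, respectively, the isolatedness of $0$ (for $\epsilon\le R$) and the injectivity of $J$, so the real obstruction is to rule out, for small $\epsilon$, a point $v\in\partial B_\epsilon$ at which $A(v)$ is a strictly negative multiple of $Jv$.

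To handle this I would not argue on the sphere directly (it is not compact), but pass to a sublevel neighbourhood. Using that $A=\Psi'$ is of class $\alpha(V)$, so $\Psi$ satisfies a Palais--Smale type condition on bounded sets, one shows $\mu_\epsilon:=\inf_{\|v\|=\epsilon}\Psi(v)>0$ for every $\epsilon\in(0,R]$; the set $\Om_\epsilon:=\{\,v\in B_\epsilon:\Psi(v)<\mu_\epsilon/2\,\}$ is then an open neighbourhood of $0$ with $\overline{\Om_\epsilon}\subset B_\epsilon$ whose only critical point is $0$, so by excision $\mathrm{Deg}[A;B_\epsilon,0]=\mathrm{Deg}[A;\Om_\epsilon,0]$. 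On $\partial\Om_\epsilon\subset\{\Psi=\mu_\epsilon/2\}$ the functional strictly exceeds its value at $0$, the outward direction to $\Om_\epsilon$ is, up to normalization, $A(v)$, and $\overline{\Om_\epsilon}$ is contractible (it deformation retracts onto $\{0\}$ along a negative pseudo-gradient flow for $\Psi$, which cannot leave $\overline{\Om_\epsilon}$ since $\Psi$ decreases along it while $\Psi\ge\mu_\epsilon$ on $\partial B_\epsilon$). Feeding these facts into the finite-dimensional Galerkin approximations that define the degree for class $\alpha(V)$, the evaluation of $\mathrm{Deg}[A;\Om_\epsilon,0]$ reduces to the classical fact (Poincar\'{e}--Hopf) that the Brouwer index of an isolated local minimizer of a $C^1$ function on $\R^m$ equals $1$; this gives $\mathrm{Deg}[A;\Om_\epsilon,0]=1$, hence the claim. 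The two places where genuine work is required are the strict positivity of $\mu_\epsilon$ in infinite dimensions (where the $\alpha(V)$/compactness hypothesis is indispensable) and the reduction of the infinite-dimensional degree to the finite-dimensional index identity; the rest is bookkeeping.
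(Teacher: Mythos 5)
The paper does not actually prove this lemma: it is stated as a citation of Skrypnik's monograph \cite{sk}, with the sentence ``The proof of these lemmas can be seen in \cite{sk}.'' There is therefore no in-paper argument to compare yours against, so I am judging the proposal on its own merits.

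The first part of your argument is sound: a local minimum that is simultaneously an isolated critical point of $\Psi'$ is indeed a strict local minimum; the positivity of $\mu_\epsilon=\inf_{\|v\|=\epsilon}\Psi$ for small $\epsilon$ does follow from the Palais--Smale property on bounded sets, which the $\alpha(V)$ condition yields via Ekeland's principle as you indicate; and the excision of the degree from $B_\epsilon$ to the sublevel neighbourhood $\Om_\epsilon$ is legitimate. The genuine gap is in the last step, the passage to Galerkin approximations and the appeal to Poincar\'{e}--Hopf. Skrypnik's class-$\alpha(V)$ degree $\text{Deg}[A;\Om_\epsilon,0]$ is, by construction, the stabilized Brouwer degree of $A_n=\nabla\bigl(\Psi|_{V_n}\bigr)$ on $\Om_\epsilon\cap V_n$, and that Brouwer degree counts \emph{all} zeros of $A_n$ in the finite-dimensional slice, not a local index at $0$. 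Even though $A$ vanishes only at $0$ inside $\Om_\epsilon$, the restriction $\Psi|_{V_n}$ can have many other critical points in $\Om_\epsilon\cap V_n$, and $0$ need not be an isolated critical point of the restriction; moreover $\Om_\epsilon\cap V_n$ need not be contractible. What Poincar\'{e}--Hopf actually gives you, when $\nabla(\Psi|_{V_n})$ points outward on $\partial(\Om_\epsilon\cap V_n)$, is $\text{deg}\,(A_n,\Om_\epsilon\cap V_n,0)=\chi(\Om_\epsilon\cap V_n)$, and controlling that Euler characteristic is exactly the missing work: your pseudo-gradient retraction shows $\overline{\Om_\epsilon}$ is contractible in $X$, but this says nothing a priori about the topology of its finite-dimensional slices. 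So the final step is not ``bookkeeping''; one needs either a finer isolating neighbourhood whose Galerkin slices are topologically controlled, or an explicit admissible homotopy of $A$ to a reference operator of known index on $\Om_\epsilon$ (which is what Skrypnik, and Amann in the Hilbert compact-gradient setting, actually construct). As written, the proposal does not close.
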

\begin{lem}\cite{sk}\label{sk2}
 Assume that $(A(v), v)_{V} >0$ for all $v\in V,\,\,||v||_{V}= r.$ Then
 $$ \text{Deg}[A,\,B_{r}(0),0] =1.$$
\end{lem}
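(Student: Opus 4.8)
The plan is to reduce the computation to a known model by a homotopy argument. Since $V$ is reflexive, by a classical renorming theorem (Troyanski) we may assume without loss of generality that $V$ and $V^*$ are locally uniformly convex; then the normalized duality map $J\colon V\to V^*$, characterized by $(J(v),v)_V=\|v\|_V^2$ and $\|J(v)\|_{V^*}=\|v\|_V$, is single-valued, demicontinuous, strictly monotone, satisfies the condition $\alpha(V)$, and vanishes only at $v=0$. Thus $J$ is an admissible map for the degree $\text{Deg}[\,\cdot\,;B_r(0),0]$, and in fact this $J$ is the operator against which the degree is normalized in the theory of \cite{sk}.

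Next I would introduce the homotopy
$$
H(t,v)=(1-t)\,J(v)+t\,A(v),\qquad (t,v)\in[0,1]\times\overline{B_r(0)},
$$
and check two things. First, admissibility on the boundary: for $\|v\|_V=r$ and $t\in[0,1]$,
$$
(H(t,v),v)_V=(1-t)\|v\|_V^2+t\,(A(v),v)_V ,
$$
which is strictly positive, since for $t<1$ the first term equals $(1-t)r^2>0$, while for $t=1$ the second term equals $(A(v),v)_V>0$ by hypothesis; hence $H(t,v)\neq0$ whenever $\|v\|_V=r$. Second, that $H$ is a legitimate homotopy for this degree, i.e.\ it is demicontinuous jointly in $(t,v)$ and each $H(t,\cdot)$ (indeed the family) satisfies $\alpha(V)$: if $u_n\rightharpoonup u_0$, $t_n\to t_0$ and $\limsup(H(t_n,u_n),u_n-u_0)_V\le0$, one splits the pairing into the $J$-part and the $A$-part and uses that both $J$ and $A$ are of class $\alpha(V)$, together with boundedness of the coefficients, to conclude $u_n\to u_0$ strongly. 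Granting this, homotopy invariance of the degree gives
$$
\text{Deg}[A;B_r(0),0]=\text{Deg}[J;B_r(0),0].
$$

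Finally, I would evaluate the model term. The map $J$ is the gradient of the $C^1$ functional $\Psi(v)=\tfrac12\|v\|_V^2$, whose only critical point is $v=0$, and $0$ is a global (hence isolated) minimum of $\Psi$; by Lemma~\ref{sk1}, $\text{Ind}(J,0)=1$. Since $0$ is the unique zero of $J$ in $\overline{B_r(0)}$, the excision property of the degree yields $\text{Deg}[J;B_r(0),0]=\text{Ind}(J,0)=1$, and combining with the displayed identity gives $\text{Deg}[A;B_r(0),0]=1$.

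I expect the routine part (the boundary estimate and the model computation) to be immediate, so the only real obstacle is the bookkeeping in the second point above: verifying that the convex-combination homotopy between $A$ and the duality map stays inside the admissible class — demicontinuity jointly in $(t,v)$ and preservation of the $\alpha(V)$ (equivalently $(S_+)$) property — which is precisely the structural fact that makes the degree theory of \cite{sk} applicable here.
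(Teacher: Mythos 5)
The paper itself does not prove this lemma; it simply cites \cite{sk} ("The proof of these lemmas can be seen in \cite{sk}"), so there is no internal argument to compare against. What you propose is the standard proof of this normalization fact for the $(S_+)$ (equivalently, condition-$\alpha$) degree, and it is essentially correct: the linear homotopy $H(t,v)=(1-t)J(v)+tA(v)$ to the normalized duality map $J$ (single-valued after a Troyanski renorming; note this is harmless since the degree and the set $B_r(0)$ do not change under equivalent renorming, only $J$ does), the boundary admissibility from $(H(t,v),v)_V=(1-t)\|v\|_V^2+t(A(v),v)_V>0$ on $\|v\|_V=r$, and the model computation $\text{Deg}[J;B_r(0),0]=\text{Ind}(J,0)=1$ via Lemma~\ref{sk1} applied to $\Psi(v)=\tfrac12\|v\|_V^2$ together with excision. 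The one step you describe too loosely is the verification that the family $H(t,\cdot)$ stays in class $\alpha(V)$: from $\limsup\bigl[(1-t_n)(J(u_n),u_n-u_0)_V+t_n(A(u_n),u_n-u_0)_V\bigr]\le 0$ you cannot simply "split the pairing" and cite $\alpha(V)$ for each piece. What makes the splitting work is (i) monotonicity of $J$, which gives $\liminf(J(u_n),u_n-u_0)_V\ge 0$ so the $J$-term cannot compensate a positive $A$-term, yielding $\limsup t_n(A(u_n),u_n-u_0)_V\le 0$; and (ii), in the endpoint case $t_n\to 0$, boundedness of $A$ on bounded sets (a standing hypothesis in Skrypnik's admissible class) so that $t_n(A(u_n),u_n-u_0)_V\to 0$ and the estimate falls entirely on the $J$-term, which itself satisfies $\alpha(V)$. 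With these two observations made explicit, the argument is complete. For what it is worth, there is also a more "from the construction" proof that avoids the duality map and renorming altogether: in each Galerkin subspace $V_n$, the boundary condition $(A(v),v)_V>0$ on $\partial B_r\cap V_n$ forces, by the Poincar\'e--Bohl theorem, the Brouwer degree of the finite-dimensional approximant on $B_r\cap V_n$ to equal $1$, and this is by definition the value of $\text{Deg}[A;B_r(0),0]$. Either route is acceptable as a reconstruction of the cited result.
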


 \begin{rem}\label{rm1}
 It is easy to see that every continuous map $A:V\longrightarrow V^{*}$ is also demicontinuous. Now one can also see that if $A$  satisfies the condition $\alpha(V)$
 and for any compact operator $K:V\longrightarrow V^{*},\,\,\,A+K$ also satisfies the condition $\alpha(V).$
 \end{rem}

 \begin{lem}\label{lm2}
 The operator $S: X \longrightarrow X^*$ satisfies $\alpha(X).$
 \end{lem}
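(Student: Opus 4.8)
The goal is to show that $S:X\to X^*$ defined by $((S(u),v))_X=\int_{\R^n}(-\Delta)^{s/2}u\cdot(-\Delta)^{s/2}v$ satisfies the condition $\alpha(X)$: whenever $u_n\rightharpoonup u_0$ weakly in $X$ and $\limsup_n\,(S(u_n),u_n-u_0)_X\le 0$, then $u_n\to u_0$ strongly in $X$. The key observation is that $S$ is essentially the Riesz map for the Hilbert-space structure on $X=H^s_0(\Om)$: indeed $((S(u),u))_X=\int_{\R^n}|(-\Delta)^{s/2}u|^2\,dx=[u]_s^2$, and by the fractional Poincaré inequality on the bounded domain $\Om$ this Gagliardo seminorm is an equivalent norm on $X$. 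So the whole argument is the standard "$\alpha$-property for a linear, bounded, symmetric, coercive operator" argument, carried out in this concrete setting.

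First I would record that $S$ is linear and (by Cauchy–Schwarz on the bilinear form) bounded, hence continuous, hence demicontinuous, so the setup of Section 4 applies. Next I would expand
\[
(S(u_n),u_n-u_0)_X = (S(u_n),u_n)_X - (S(u_n),u_0)_X = [u_n]_s^2 - \langle\!\langle S(u_n),u_0\rangle\!\rangle .
\]
Since $u_n\rightharpoonup u_0$ weakly in $X$ and $S(u_0)\in X^*$ is a fixed bounded functional, and also $v\mapsto \int (-\Delta)^{s/2}u_0\cdot(-\Delta)^{s/2}v$ is a fixed bounded functional on $X$, we get $(S(u_n),u_0)_X\to (S(u_0),u_0)_X=[u_0]_s^2$. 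Feeding this into the hypothesis $\limsup_n (S(u_n),u_n-u_0)_X\le 0$ yields
\[
\limsup_{n\to\infty}\,[u_n]_s^2 \;\le\; [u_0]_s^2 .
\]
On the other hand, weak lower semicontinuity of the (equivalent) norm $[\cdot]_s$ gives $[u_0]_s\le\liminf_n [u_n]_s$. Combining the two, $[u_n]_s\to[u_0]_s$; together with $u_n\rightharpoonup u_0$ in the Hilbert space $(X,[\cdot]_s)$ this forces $[u_n-u_0]_s\to 0$ (use the parallelogram-type identity $[u_n-u_0]_s^2=[u_n]_s^2-2((S(u_n),u_0))_X+[u_0]_s^2$ and pass to the limit). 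Since $[\cdot]_s$ is equivalent to $\|\cdot\|_X$ on $X$, this is exactly $u_n\to u_0$ strongly in $X$.

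The only genuinely substantive point — and the step I would be most careful about — is the equivalence of $[\cdot]_s$ with the ambient norm $\|\cdot\|_s$ on $X=H^s_0(\Om)$, i.e. the fractional Poincaré inequality $\|u\|_{L^2(\Om)}^2\le C[u]_s^2$ for $u\in X$. This is standard for bounded $\Om$ (it follows, e.g., from the compact embedding $H^s(B_R)\hookrightarrow\hookrightarrow L^2(B_R)$ quoted in Theorem~\ref{em} together with the variational characterization of $\lambda_1$ in \eqref{vari}, which already asserts $\lambda_1>0$), so I would simply invoke it, noting that $[u]_s^2=\int_{\R^n}|(-\Delta)^{s/2}u|^2\,dx$ and $[u]_s^2+\|u\|_{L^2}^2=\|u\|_s^2$. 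Everything else is routine Hilbert-space manipulation; no estimate needs to be "ground through."
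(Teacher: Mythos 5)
Your argument is correct and is essentially the same as the paper's: both reduce to the observation that, since $S(u_0)\in X^*$ is a fixed functional and $u_n\rightharpoonup u_0$, one has $(S(u_0),u_n-u_0)_X\to 0$, so the hypothesis $\limsup (S(u_n),u_n-u_0)_X\le 0$ together with the bilinearity and symmetry of $S$ forces $\int_{\R^n}|(-\Delta)^{s/2}(u_n-u_0)|^2\,dx\to 0$. The paper writes this in one compressed chain, $\limsup (S(u_n),u_n-u_0)=\limsup (S(u_n)-S(u_0),u_n-u_0)=\lim [u_n-u_0]_s^2$, which is nonnegative and bounded above by zero, hence zero; your route via $\limsup [u_n]_s^2\le [u_0]_s^2$, weak lower semicontinuity, and the parallelogram identity (a Radon--Riesz step) arrives at the same place with one redundant intermediate step. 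Your explicit mention of the Poincar\'e inequality/norm equivalence is a small increase in rigor over the paper, which leaves the identification of $\int|(-\Delta)^{s/2}\cdot|^2$ with $\|\cdot\|_X^2$ implicit (it is in fact how the norm on $H_0^s(\Omega)$ is defined in Section 2).
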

\begin{proof}
 Let $u_n\rightharpoonup u_0$ in $X$ and
$$ \limsup (S(u_n),\,u_{n}-u_0)_X \leq 0,$$
then we claim $u_n \longrightarrow u_0$ in $X.$
 \begin{equation} \label{cat}
\begin{aligned}
 0 & \geq  \limsup_{n\rightarrow \infty} (S(u_n),\,u_{n}-u_0)_X  \\
& =   \limsup_{n\rightarrow \infty} (S(u_n)- S(u_0),\,u_{n}-u_0)_X, \,\,\,(\mbox{by the definition of}\,\,u_n\rightharpoonup u_0\,\,\mbox{ in}\,\,X) \\
& = \lim_{n\rightarrow \infty}  \int_{\R^n}| (-\Delta)^{\frac{s}{2} }(u_{n} - u_0)  |^2  dx       \\
& \geq 0,
\end{aligned}
\end{equation}
 and therefore the claim is proved.
\end{proof}
Now in view of Remark\,\ref{rm1}, from Lemma\,\ref{lm1} and Lemma\,\ref{lm2}, it is trivial to see that the operator $S_{\lambda}:=S-\lambda T- F(\lambda,\,\ldotp)$ satisfies the
condition $\al(X)$
and therefore we can define the Deg$[S-\lambda T- F(\lambda,\,\ldotp); U, 0],$ where $U\subset X$ is a bounded open set such that $T_{\lambda}(u)\neq 0$ for any $u\in \partial U.$

 \section{Proof of main theorems: bifurcation from $\lambda_1$}
 \begin{definition}
  Let $W= \R\times X$ be equipped with the norm
  \begin{equation}\label{no}
   ||(\lambda,\,u)||_{W} = (|\lambda|^2 + ||u||_{X}^2)^{\frac{1}{2}},\,\,\,\,(\lambda,\,u)\in W.
  \end{equation}
Let us define
$$ \C= \{(\lambda,\,u)\in W: (\lambda,\,u) \,\,\mbox{solves}\,\,\eqref{bif},\,\,u\neq 0                \}. $$
 We say that $\C$ is a continuum of nontrivial solutions of \eqref{bif} if it is a connected set in $W$ with respect to the topology induced by the norm \eqref{no}, see\,\cite{rabh}.
 Following in the sense of Rabinowitz, we say that $\lambda_0 \in \R$
 is a bifurcation point of \eqref{bif} if there is a continuum of nontrivial solutions  $\C$ of \eqref{bif} such that $(\lambda_0,\,0)\in \overline{\C}$
 and either $\C$ is unbounded in $W$ or there
 is an eigenvalue $\lambda^* \neq \lambda_0$ such that $(\lambda^*,\,0)\in \overline{\C}.$
 \end{definition}

\subsection{Proof of Theorem 1.1}

 \begin{proof}
 The proof has the same spirit as in \cite{dra}. The proof consists of three steps.

\textit{Step 1.} Let us consider the operator $\tilde{S}_\lambda(u)=S(u)-\lambda T(u).$
From the variational characterization of $\lambda_1,$ it follows that for $\lambda\in (0,\lambda_1)$ and any $u\in X$ with $\|u\|_X \neq 0,$ we have
\be \nonumber
(\tilde{S}_\lambda(u),u)_X>0.
\ee
Then the degree
\be\label{inter1}
\text{Deg }[\tilde{S}_\lambda; B_r(0), 0]
\ee
is well defined for any $\lambda\in (0,\lambda_1)$ and any ball $B_r(0)\subseteq X.$ Applying Lemma \ref{sk2}, we get
\be\label{inter2}
\text{Deg } [\tilde{S}_\lambda; B_r(0),0]=1, \,\, \lambda\in (0,\lambda_1).
\ee
Now by Proposition\,\ref{pn2}, there exists a $\delta>0$ such that the interval $(\lambda_1, \lambda_1+\delta)$ does not contain any eigenvalue of \eqref{ei} and
hence the degree \eqref{inter1} is well defined also for $\lambda\in (\lambda_1, \lambda_1+\delta)$. To evaluate $\text{Ind}(\tilde{S}_\lambda,\,0)$
for $\lambda \in (\lambda_1, \lambda_1+\delta)$, we use the similar procedure as in \cite{dra1,dra2}.

Fix a $K>0$ and define a function $h\colon \R\rar \R$ by
\be\nonumber
h(t)=
\left\{
  \begin{array}{ll}
    0, & \mbox{for } t\leq K, \\
    \frac{2\delta}{\lambda_1}(t-2K), & \mbox{for } t\geq 3K,
  \end{array}
\right.
\ee
and $h(t)$ is  positive and strictly convex in $(K,\,3K)$. We define the functional
\be \nonumber
J_\lambda(u)=\frac{1}{2}(S(u),u)_X -\frac{\lambda}{2} (T(u),u)_X + h \left(\frac{1}{2}(S(u),u)_X\right).
\ee
Then $J_\lambda$ is continuously Fr\'{e}chet differentiable and its critical point $u_0\in X$ corresponds to the solution of the equation
\be \nonumber
S(u_0)-\frac{\lambda}{1+h'\left(\frac{1}{2}(S(u_0),u_0)_X\right)}T(u_0)=0.
\ee
However, since $\lambda \in (\lambda_1, \lambda_1+\delta)$, the only nontrivial critical points of $J_\lambda'$ occur if
\be\nonumber
h'\left(\frac{1}{2}(S(u_0),u_0)\right)=\frac{\lambda}{\lambda_1}-1,
\ee
 and hence we must have
\be \nonumber
\frac{1}{2}(S(u_0),u_0)_X \in (K,3K).
\ee
In this case, either we have $u_0=-u_1$ or $u_0=u_1,$ where $u_1>0$ is the principle eigenfunction. So, for $\lambda\in (\lambda_1, \lambda_1+\delta), \,\,J_\lambda'$ has precisely
three isolated critical  points $-u_1,\,0,\,u_1.$ Now it is easy to see that the functional $J_\lambda$ is weakly lower semicontinuous.

Indeed, assume $u_n\rar u_0$ weakly in $X.$ Then
\be \label{inter3}
(T(u_n),u_n)_V \rar (T(u_0),u_0)_X,
\ee
due to compactness of $T,$ and
\begin{gather}\nonumber
\liminf_{n\rar \infty} \left(\frac{1}{2} (S(u_n),u_n)_X + h \left(\frac{1}{2}(S(u_n), u_n)_X\right)\right)\\
\geq \frac{1}{2} (S(u_0),u_0)_X + h \left(\frac{1}{2}(S(u_0), u_0)_X\right)\label{inter4}
\end{gather}
by the facts that $\liminf \|u_n\|_X \geq \|u_0\|_X$ and $h$ is nondecreasing.

The relations \eqref{inter3} and \eqref{inter4} then imply
\be \nonumber
\liminf_{n\rar \infty} J_\lambda(u_n) \geq J_\lambda(u_0).
\ee
Observe that $J$ is coercive, that is,
\be \nonumber
\lim_{\|u\|_X\rar \infty}J_\lambda (u)=\infty.
\ee
In fact, we can estimate $J_\lambda(u)$ as follows:
 \begin{align*}
 J_\lambda(u) &= \frac{1}{2} (S(u),u)_X-\frac{\lambda_1}{2} (T(u),u)_X+ \frac{\lambda_1 -\lambda}{2} (T(u),u)_X+ h\left(\frac{1}{2} (S(u),u)_X\right)\\
 & \geq \frac{\lambda_1 -\lambda}{2} (T(u),u)_X+ h\left(\frac{1}{2} (S(u),u)_X\right) \\
 & \geq\frac{\lambda_1 -\lambda}{2\lambda_1} (S(u),u)_X+ h\left(\frac{1}{2} (S(u),u)_X\right) \\
 & \geq -\frac{\delta}{2\lambda_1}(S(u),u)_X + \frac{2\delta}{\lambda_1}\left[\frac{1}{2}(S(u),u)_X-2K\right]\rar \infty,
 \end{align*}
 for $\|u\|_X\rar \infty$. Here we have used the variational characterization for $\lambda_1$ and the definition of $h.$
 Since $J_\lambda$ is an even functional, there are precisely
 two points at which the minimum of $J_\lambda$ is achieved: $-u_1, u_1.$
 The point $0$ is saddle type of isolated critical point. By
 Lemma \ref{sk1}, we have
 \be\label{inter5}
  \text{Ind } (J_\lambda', -u_1)=  \text{Ind } (J_\lambda', u_1)=1.
 \ee
 Also, we have that $(J_\lambda'(u),u)_X>0$ for any $u\in X,\, \|u\|_X=R$ with $R>0$ large enough. Now in order to show the coercivity of
 $J_\lambda,$  we use the following estimate:
 \begin{align*}
 (J_\lambda'(u),u)_X & = (S(u),u)_X-\lambda (T(u),u)_X+ h'\left(\frac{1}{2}(S(u),u)_X \right)(S(u),u)_X \\
 & = (S(u),u)_X-\lambda_1 (T(u),u)_X \\
 &\,\,\,\, + h'\left(\frac{1}{2}(S(u),u)_X \right)\left[(S(u),u)_X -\frac{\lambda-\lambda_1}{h'\left(\frac{1}{2}(S(u),u)_X \right)}(T(u),u)_X\right] \\
 & \geq \frac{2\delta}{\lambda_1} \left[(S(u),u)_X-2K\right]\cdot \left[(S(u),u)_X-\frac{\lambda_1}{2} (T(u),u)_X\right]\rar\infty
 \end{align*}
 for $\|u\|_X\rar \infty$. We have used again the variational characterization of $\lambda_1$ and the definition of $h.$ Lemma \ref{sk2} then implies that
 \be \label{inter6}
 \text{Deg } [J_\lambda'; B_R(0),0]=1.
 \ee
 We choose $R$ so large that $\pm u_1 \in B_R (0).$ Now, by the additivity of the degree (see \cite{sk}), and \eqref{inter5} and \eqref{inter6}, we have
 \be \label{inter7}
 \text{Ind } (J_\lambda', 0)=-1.
 \ee
 Further, by the definition of $h,$
 \be \label{inter8}
 \text{Deg }[\tilde{S}_\lambda; B_{r^*}(0), 0]= \text{Ind } (J_\lambda',0)
 \ee
 for $r^*>0$ small enough. We then conclude from \eqref{inter2}, \eqref{inter7} and \eqref{inter8} that
 \be \label{inter9}
   \begin{array}{ll}
     \text{Ind }(\tilde{S}_\lambda,0)=1, & \lambda\in(0,\lambda_1), \\
     \text{Ind }(\tilde{S}_\lambda,0)=-1, & \lambda\in (\lambda_1,\lambda_1+\delta).
   \end{array}
\ee
\textit{Step 2.} It follows from \eqref{li} and the homotopy invariance of the degree that for $r^*>0$ small enough,
 \be \nonumber
 \text{Deg }[S_\lambda; B_{r^*}(0),0]=\text{Deg } [\tilde{S}_\lambda; B_{r^*}(0),0]
 \ee
 for $\lambda\in (0,\lambda_1+\delta)\backslash \{\lambda_1\}$. We have, from \eqref{inter9},
\be \nonumber
   \begin{array}{ll}
     \text{Ind }(S_\lambda,0)=1, & \lambda\in(0,\lambda_1), \\
     \text{Ind }(S_\lambda,0)=-1, & \lambda\in (\lambda_1,\lambda_1 +\delta).
   \end{array}
\ee
\textit{Step 3.} Now thanks to Theorem 1.3\,\cite{rab},  we use the similar lines of proof as Theorem 1.3\,\cite{rab} and draw the conclusion of this theorem.
This completes the proof.

\end{proof}

\subsection{Proof of Theorem 1.2}
\begin{proof}
Let us recall that the problem
\begin{eqnarray}\label{ae}
 \left\{
\begin{gathered}
 (-\Delta)^{s} u  =  h\,\,\,\mbox{in}\,\,\Omega,\\
 u   = 0 \,\,\,\,\mbox{in}\,\,\,\,\,\,\,\R^n \backslash\Omega
\end{gathered}\right.
 \end{eqnarray}
has a unique solution  for each $h\in H^{-s} (\Omega),$ i.e., there exists unique $u\in H_{0}^{s}(\Omega)=X_1$ such that
\begin{equation}
 \int_{\R^n} (-\Delta)^{\frac{s}{2}}u\ldotp (-\Delta)^{\frac{s}{2}}v = \int_{\Om} h\ldotp v,\,\,\forall\,\,v\in X_1.
\end{equation}
Let us denote by $R(h)$ the unique weak solution of \eqref{ae}. Then $R:\,\,X_1^* \longrightarrow  X_1$ is a continuous operator. Since we know that
$X_1$ embeds compactly into $L^{r}(\Om)$ for each $r\in [1,\,2_{*}(s)).$  Therefore, it follows that the restriction of $R$ to $L^{r'}(\Om)$ is a completely
continuous operator and thus $R$ transforms weak convergence in $L^{r'}(\Om)$ into strong convergence in $X.$  Let us reformulate the problem \eqref{bif}. It is clear
that $(\lambda,\,u)$ is a solution of \eqref{bif} iff $(\lambda,\,u)$ satisfies
\begin{equation}\label{refo}
 u= R(\lambda u + F(\lambda,\,u) ),
\end{equation}
where $F(\lambda,\,\ldotp)$ denotes the Nemitsky operator associated with $f.$ From \rm{(H4)}, it is easy to see that the right hand side of \eqref{refo} defines a completely
continuous operator from $X$ into itself. Let us suppose that $(\bar{\lambda},\,0)$ is a bifurcation point of \eqref{bif} and we show that
$\bar{\lambda}$ is an eigenvalue of \eqref{ei}. Since $(\bar{\lambda},\,0)$ is a bifurcation point, there is a sequence $\{ \lambda_n,\,u_n\}$  of nontrivial solutions of \eqref{bif}
such that $\lambda_n\longrightarrow \bar{\lambda}$ in $\R$ and $u_n \longrightarrow 0$ in $X.$ Also, since
\begin{eqnarray}\label{ab}
 \left\{
\begin{gathered}
 (-\Delta)^{s} u_n  = \lambda_n u_n + f(\lambda_n,\,x,\,u_n) \,\,\,\mbox{in}\,\,\Omega,\\
 u_n   = 0 \,\,\,\,\mbox{in}\,\,\,\,\,\,\,\R^n \backslash\Omega,
\end{gathered}\right.
 \end{eqnarray}
so we have
\begin{equation}\label{abc}
 \widetilde{u_n} = R\left(\lambda_n \widetilde{u_n} + \frac{F(\lambda_n,\,u_n)}{||u_n||_X}\right),
\end{equation}
where $\widetilde{u_n}= \frac{u_n}{||u_n||_X}.$ Now we claim that
\begin{equation}\label{cn}
 \frac{F(\lambda_n,\,u_n)}{||u_n||_X}\longrightarrow 0\,\,\mbox{in}\,\,L^{q'},
\end{equation}
where $q$ is chosen in $\rm{(H4)}.$ We note that

\begin{equation}\label{cn1}
 \frac{F(\lambda_n,\,u_n)}{||u_n||_X}=    \frac{F(\lambda_n,\,u_n)}{u_n} \widetilde{u_n}.
\end{equation}
 In order to prove the claim, it is sufficient to find a real number $r>1$ and a constant $C>0$ such that
 \begin{equation}\label{cn2}
 \left|\frac{F(\lambda_n,\,u_n)}{u_n}\right|^{q'}\longrightarrow 0\,\, \mbox{in}\,\,L^{r}
\end{equation}
and
\begin{equation}\label{cn3}
|| |\widetilde{u_n}|^{q'}    ||_{L^{r'}} \leq C,\,\,\,\forall\,\,n\in N.
\end{equation}
Indeed, from \eqref{cn1} and using H\"{o}lder's inequality, we find such an $r.$ Let us fix $\epsilon>0$ and choose positive numbers $\delta= \delta(\epsilon)$ and $M = M(\delta)$
such that for every $x\in \Om$ and $n\in N,$ in view of \rm{(H3)}, \rm{(H4)}, the following inequalities hold:
 \begin{eqnarray}\label{i}
 \begin{gathered}
 |f(\lambda_n,\,x,\,t)|\leq \epsilon |t| \,\,\mbox{for}\,\,|t|\leq \delta.\\
 |f(\lambda_n,\,x,\,t)|\leq M |t|^{q-1} \,\,\mbox{for}\,\,|t|\geq \delta.
 \end{gathered}
 \end{eqnarray}
Let $r$ be a real number greater than $1.$  Then from \eqref{i}, we obtain

\begin{equation} \label{cnn}
\begin{aligned}
\left \|\left|\frac{F(\lambda_n,\,u_n)}{u_n}\right|^{q'}\right\|^{r}_{L^r}
&  = \int_{\Om} \left|\frac{f(\lambda_n,\,x,\,u_n) }{u_n}  \right|^{r q'} dx       \\
& = \epsilon |\Om|   +   M^{q' r} \int_{\Om} |u_n|^{q' r (q-2) }.  \\
\end{aligned}
\end{equation}
 From \eqref{cnn} and since $u_n\longrightarrow 0$ in $X,$ we have that \eqref{cn2} is satisfied if
 \begin{equation}\label{fi1}
  q' r (q-2) < 2_{*} (s).
 \end{equation}
Since $\widetilde{u_n}$ is bounded in $L^{2_{*}(s)}, $ we see that \eqref{cn3} is satisfied if

 \begin{equation}\label{fi2}
  q' r'  < 2_{*} (s).
 \end{equation}
 Finding an $r$ satisfying \eqref{fi1} and \eqref{fi2} is equivalent to find an $r$ such that
 $$  \frac{q'(q-2)}{2_{*}(s)} < \frac{1}{r} < 1- \frac{q'}{2_{*}(s)} $$
 and this is always possible because $q< 2_{*}(s).$ Now from \eqref{abc} and \eqref{cn} and the compactness of $R$, we can assume  (passing to a subsequence, if necessary)
 $\widetilde{u_n}\longrightarrow \widetilde{u}$ in $X.$ Now passing the limit in \eqref{abc}, we obtain that
 $$  \widetilde{u}= R(\bar{\lambda} \widetilde{u}  ).$$  Since $\widetilde{u} \neq 0$ so this implies that $\bar{\lambda}$ is an eigenvalue of \eqref{ei} and thus the theorem is proved.

\end{proof}

 % \begin{center}
% {\bf Acknowledgment}
% \end{center}

\end{document}